\newcommand{\be}{\begin{eqnarray}}
\newcommand{\ee}{\end{eqnarray}}
\newcommand{\beq}{\begin{equation}}
\newcommand{\eeq}{\end{equation}}
\newcommand{\ben}{\begin{eqnarray*}}
\newcommand{\een}{\end{eqnarray*}}
\newtheorem{theorem}{Theorem}
\newtheorem{defi}{Definition}
\newtheorem{lemma}[theorem]{Lemma}
\newtheorem{proposition}[theorem]{Proposition}
\newtheorem{remark}[theorem]{Remark}
\global\let\AddToReset=\@addtoreset
\newcommand{\RR}{\mathbb R}
\newcommand{\rbarbetad}{r_{\bar\beta}}
\begin{document}
\title[Multiple existence of sign changing solutions]
      {Multiplicity results for sign changing bound state solutions of
            a semilinear equation   }\thanks{This research was supported by
        FONDECYT-1110074 for the first author,
        FONDECYT-1110268  for the second author and FONDECYT-11121125 for the  third author.}
\author{Carmen Cort\'azar}
\address{Departamento de Matem\'atica, Pontificia
        Universidad Cat\'olica de Chile,
        Casilla 306, Correo 22,
        Santiago, Chile.}
\email{\tt ccortaza@mat.puc.cl}
\author{Marta Garc\'{\i}a-Huidobro}
\address{Departamento de Matem\'atica, Pontificia
        Universidad Cat\'olica de Chile,
        Casilla 306, Correo 22,
        Santiago, Chile.}
\email{\tt mgarcia@mat.puc.cl}
\author{Pilar Herreros}
\address{Departamento de Matem\'atica, Pontificia
        Universidad Cat\'olica de Chile,
        Casilla 306, Correo 22,
        Santiago, Chile.}
\email{\tt pherrero@mat.puc.cl}


\begin{abstract}
In this paper we give conditions on $f$ so that problem
 $$ \Delta u
+f(u)=0,\quad x\in \RR^N, N\ge 2, $$
 has at least two radial bound state solutions with any prescribed number of zeros, and such that $u(0)$ belongs to a specific subinterval of $(0,\infty)$. This property will allow us to give conditions on $f$ so that this problem has at least any given number of radial solutions having a  prescribed number of zeros.

\end{abstract}

\maketitle

\section{Introduction and main results}

In this paper we  give conditions on the nonlinearity $f$ so that the problem
\begin{eqnarray}\label{pde}
\begin{gathered}
 \Delta u
+f(u)=0,\quad x\in \RR^N, N\ge 2, \\
 \lim\limits_{|x|\to\infty}u(x)=0,
\end{gathered}
\end{eqnarray}
 has at least two solutions with $u(0)>0$ having any  prescribed number of nodal regions. To this end we consider the radial version of \eqref{pde}, that is
\begin{eqnarray}\label{eq2}
\begin{gathered}
u''+\frac{N-1}{r}u'+f(u)=0,\quad r>0,\quad N\ge 2,\\
u'(0)=0,\quad \lim\limits_{r\to\infty}u(r)=0,
\end{gathered}
\end{eqnarray}
where all throughout this article  $'$ denotes differentiation with respect to $r$.

Any nonconstant solution to \eqref{pde} is called a bound state solution. Bound state solutions such that $u(x)>0$ for all $x\in\mathbb R^N$, are referred to as a first bound state solution, or  a ground state solution.

\medskip

The existence of  solutions for \eqref{pde} has been established by many authors under different regularity and growth assumptions on the nonlinearity $f$. For the existence of ground state solutions  see for example \cite{b-l1, fg, fls, gst} and the references therein.   The existence of infinitely many  radial bound states was first proved in \cite{stra} and then generalized in \cite{b-l2}. Later,   \cite{cdghm, cghy4, dghm,  jk, mtw} proved the existence of at least one solution of \eqref{eq2} with $u(0)>0$  having any prescribed number of zeros. For the non-autonomous case we refer to \cite{bw1, cmt, stru} and for the non-radial case we refer to \cite{bw2, cds, cmp, mpw} and the references therein.
\medskip

The uniqueness problem for positive solutions to problem \eqref{pde} has been extensively studied during the past decades, see for example  \cite{fls, ms, pel-ser2, pu-ser, st}. More recently, some results concerning the uniqueness of higher order bound states have been obtained, see  \cite{troy,cghy,cghy2}.

As for multiplicity results, the following non-autonomous problem
$$-\Delta u=f(x,u),\qquad u(x)\to0\quad\mbox{as $|x|\to\infty$}$$
has been considered for a strictly non-autonomous $f$ of the form $f(x,u)=g(x,u)-a(x)u$ by \cite{at1, at2, aw,  cmp, cps1, cao, dwy, hl, lu, zhu}. Under different assumptions on the nonnegative function $g$ and the coefficient $a$, they have established existence of multiple ground state solutions.

\medskip
In this paper we study the autonomous case. We give conditions on $f$ so that problem \eqref{eq2} has at least two solutions with any prescribed number of zeros, and such that $u(0)$ belongs to a specific subinterval of $(0,\infty)$. This property will allow us to give conditions on $f$ so that problem \eqref{eq2} has at least any given number of solutions having a  prescribed number of nodes.

We will work under the following two sets of assumptions on the nonlinearity $f$:\\

$\mathbf{(A1)}$    Finite case:
$\gamma_*<\infty$
\begin{enumerate}
\item[$(f_1)$] $f$ is a continuous  function defined in $(\gamma_*^-,\gamma_*]$, $-\infty\le\gamma_*^-<0<\gamma_*$, $f(0)=0$, $f(\gamma_*)=0$, and $f$ is locally Lipschitz in $(\gamma_*^-,\gamma_*]\setminus\{0\}$.
\item[$(f_2)$]
There exists $\delta>0$ such that if we set $F(s)=\int_0^sf(t)dt$, it holds that $F(s)<0$ for all $0<|s|<\delta$, and $\lim_{s\to\gamma_*^-}F(s)=F(\gamma_*)$, $F(s)<F(\gamma_*)$ for all $s\in(\gamma_*^-,\gamma_*)$.
\item[$(f_3)$] $F$ has a local maximum at some $\gamma\in(\delta,\gamma_*)$ with $F(\gamma)>0$.
\item[$(f_4)$] $f$ has a finite number of zeros in $(\gamma_*^-,-\delta)\cup(\delta,\gamma_*)$ and $f$ changes sign at these  points.
\end{enumerate}
\medskip

$\mathbf{(A2)}$    Infinite case: ($\gamma_*=\infty$)
\begin{enumerate}
\item[$(f_1)$] $f$ is a continuous  function defined in $(\gamma_*^-,\infty)$, $-\infty\le\gamma_*^-<0$, $f(0)=0$  and $f$ is locally Lipschitz in $(\gamma_*^-,\infty)\setminus\{0\}$.
\item[$(f_2)$]
There exists $\delta>0$ such that if we set $F(s)=\int_0^sf(t)dt$, it holds that $F(s)<0$ for all $0<|s|<\delta$, and $F(s)<\lim\limits_{s\to\infty}F(s)= \lim\limits_{s\to\gamma_*^-}F(s)$ for all $s$.
\item[$(f_3)$] $F$ has a local maximum at some $\gamma\in(\delta,\infty)$ with $F(\gamma)>0$.
\item[$(f_4)$] $f$ has a finite number of zeros in $(\gamma_*^-,-\delta)\cup(\delta,\infty)$ and $f$ changes sign at these  points.
\item[$(f_5)$]
 There exists $s_0\in(\gamma_*^-,0)$  such that
 $Q(s)>0$ for all $s\in(\gamma_*^-,s_0)$, and there exists $\theta\in(0,1)$
\ben\lim_{s\to\infty}\Bigl(\inf_{s_1,s_2\in[\theta s,s]}Q(s_2)\Bigl(\frac{s}{f(s_1)}\Bigr)^{N/2}\Bigr)=\infty,
\een
where $Q(s):=2NF(s)-(N-2)sf(s)$.
\end{enumerate}

\medskip

As the Lipschitz assumption on $f$ in $(f_1)$ does not include $\{0\}$, the solutions that we obtain may have compact support, see for example \cite{fls}.

In order to state our results, we define some constants that will be used throughout this paper:
\begin{defi}\label{constants}\rm Under assumptions $(A1)$ or $(A2)$, we define the following special constants:
\begin{enumerate}
\item[(i)] We set $\gamma_0=0$, and denote by $\gamma_1$ the first positive local maximum point for $F$ such that $F(\gamma_1)>0$. Next, for $i\in\mathbb N$, we denote by $\gamma_{i+1}$ the first maximum point of $F$ occurring after $\gamma_i$ such that $F(\gamma_i)< F(\gamma_{i+1})$, with the convention that the last one is $\gamma_M$ and we set  $\gamma_{M+1}=\gamma_*$. Similarly, we denote by $\gamma_{-1}$ the first local  negative maximum point (if any) for $F$  with $F(\gamma_{-1})>0$ and we denote by $\gamma_{i-1}$ the first local maximum of $F$ which occurs to the left of $\gamma_{i}$ such that $F(\gamma_{i})< F(\gamma_{i-1})$ with the convention that the last one is $\gamma_{\bar M}$ and we set $\gamma_{\bar M-1}=\gamma_*^-$. If there are no negative local   maximum points for $F$ with $F>0$, we will define $\bar M=0$ and  $\gamma_{\bar M-1}=\gamma_{-1}=\gamma_*^-$.

\item[(ii)]For $i\ge 1$, we  denote by $\beta_i$ the largest point in $(\gamma_{i-1},\gamma_i)$ such that $F(\beta_i)=F(\gamma_{i-1})$ and denote by $\beta_*$ the largest point in $(\gamma_M,\gamma_*)$ (or in $(\gamma_M,\infty)$) where $F(\gamma_M)=F(\beta_*)$. Similarly, for $i\le -1$, we define $\beta_{i}$ as the smallest point in $(\gamma_{i},\gamma_{i+1})$ such that $F(\beta_{i})=F(\gamma_{i+1})$, and  $\beta_*^-$ as the smallest point in $(\gamma_*^-,\gamma_{\bar M})$ where $F(\gamma_{\bar M})=F(\beta_*^-)$.
 \end{enumerate}
\mbox{ }\quad Finally, we identify a positive constant $\bar\beta$ as follows:
\begin{enumerate}

\item[(iii)]If $f$ satisfies $(A1)$, we choose $\bar\beta>\beta_*$ such that $F(\bar\beta)>F(\beta_*^-)$ and if $f$ satisfies $(A2)$ we define $\bar\beta$ as a point $\bar\beta>\beta_*$, such that $F(\bar\beta)>F(\beta_*^-)$ and $Q(s)>0$ for all $s$ satisfying $F(s)>F(\bar\beta)$. (this point exists by $(f_5)$)

 \end{enumerate}
\end{defi}

\begin{figure}[h]
\begin{center}
 \includegraphics[keepaspectratio, width=13cm]{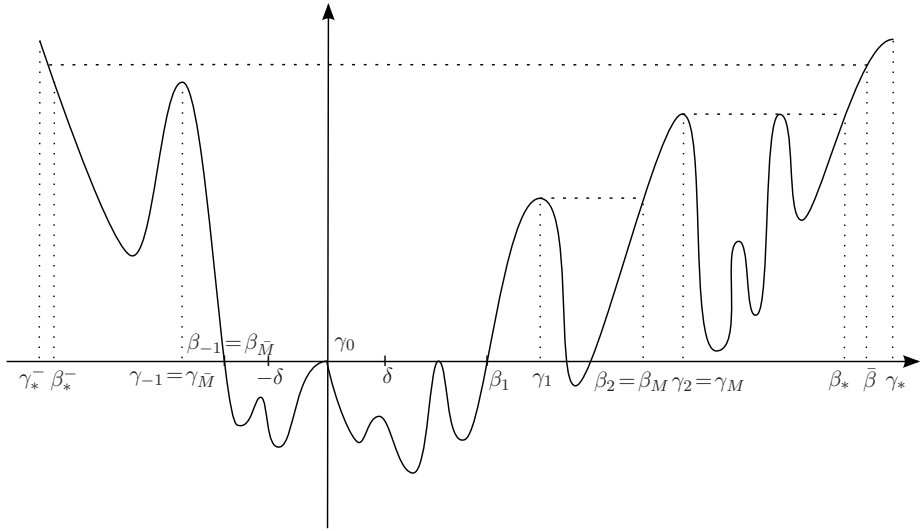}
 \end{center}
 \caption{The function $F$}
 \end{figure}

Our main multiplicity results are the following, where from now on $\gamma_*=\infty$ in the case that $f$ satisfies assumptions $(A2)$.

\begin{theorem}\label{main0}
Assume that $f$ satisfies either assumptions $(A1)$ or $(A2)$.
Then,
there exists $k_0\in\mathbb N\cup\{0\}$ such that for any $k\ge k_0$, there exist at least two solutions $u$ of \eqref{eq2}, with initial value in $(\beta_*,\gamma_*)$, having exactly $k$ sign changes in $(0,\infty)$.

\end{theorem}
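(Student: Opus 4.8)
The strategy is a shooting argument in the initial value $d = u(0)$. For each $d \in (0,\gamma_*)$ let $u(\cdot,d)$ denote the solution of the initial value problem
\[
u'' + \frac{N-1}{r}u' + f(u) = 0, \qquad u(0) = d, \quad u'(0) = 0,
\]
which exists and is unique on a maximal interval by $(f_1)$ and standard ODE theory (with the usual care near $u=0$, where $f$ need only be continuous). The plan is to classify each $d$ according to the eventual behaviour of $u(\cdot,d)$: either it is a bound state with exactly $k$ sign changes, or it crosses zero fewer than $k$ times and then ``escapes'' (its energy $E(r) = \tfrac12 (u')^2 + F(u)$ together with monotonicity forces $|u|$ to stay bounded away from $0$ for all large $r$, or the solution leaves the domain), or it crosses zero more than $k$ times. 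Using the decreasing energy identity $E'(r) = -\frac{N-1}{r}(u')^2 \le 0$ and $(f_2)$–$(f_4)$, one shows the ``escape'' set and the ``rapid oscillation'' set are open, while the set of initial data producing a $k$-node bound state is what remains. The key structural input is that for $d$ just above $\beta_*$ the solution has few zeros (the energy level $F(d) > F(\beta_*) = F(\gamma_M)$ is high enough that, after at most $\bar M$ or so sign changes corresponding to the nested maxima $\gamma_i$, the orbit in the phase plane is trapped outside a neighbourhood of the origin), whereas as $d \uparrow \gamma_*$ the damping term makes the number of zeros tend to infinity — this is exactly where $(f_5)$ and the constant $\bar\beta$ enter in the infinite case, and where $(f_3)$–$(f_4)$ (finitely many zeros of $f$, $F(\gamma)>0$ at a genuine local max) do the work in the finite case.

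Concretely, I would introduce for each $j \ge 0$ the set
\[
N_j = \{\, d \in (\beta_*, \gamma_*) : u(\cdot,d) \text{ is a bound state with exactly } j \text{ zeros}\,\},
\]
and the ``fast'' set $S_j = \{ d : u(\cdot,d) \text{ has at least } j+1 \text{ zeros before escaping or blowing up}\}$. One proves: (a) $S_j$ is open (continuous dependence plus the fact that a transversal zero persists, and persistence of escape); (b) $S_j$ is nonempty for all $j$ large — this is the oscillation estimate, counting zeros via a Sturm-type comparison on intervals where $F(u)$ is bounded below, and letting $d \to \gamma_*$; (c) near $d = \beta_*^+$ the solution has a small, controlled number of zeros, so $\beta_*$ is a limit of points not in $S_j$ once $j$ exceeds that number, call it $k_0$. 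Then for each $k \ge k_0$, $(\beta_*,\gamma_*) \cap \partial S_k$ and $(\beta_*,\gamma_*) \cap \partial S_{k-1}$ are each nonempty (as boundaries of nonempty open proper subsets of the interval), and a boundary point of $S_k$ that is not interior to $S_{k-1}$ yields, by a continuity/squeezing argument on the number and nature of the zeros, a solution with exactly $k$ sign changes that does not escape, i.e.\ a $k$-node bound state. Getting \emph{two} such solutions comes from exhibiting two disjoint such boundary points: one from ``above'' (approaching from the escaping side near $\gamma_*$, descending through $S_{k+1}\setminus S_k$) and one from ``below'' (near $\beta_*$, ascending out of the slow region), so that $N_k \cap (\beta_*,\gamma_*)$ has at least two connected components' worth of parameters — monotonicity/continuity of the zero count on the complement of $N_k$ keeps these separated.

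I expect the main obstacle to be step (b)–(c): making the zero count a well-defined, upper-semicontinuous quantity and controlling it precisely near the two ends of $(\beta_*,\gamma_*)$. Near $\gamma_*$ in case $(A2)$ one must turn the hypothesis $(f_5)$ — the growth condition on $Q(s) = 2NF(s) - (N-2)sf(s)$ along $[\theta s, s]$ — into a genuine lower bound on the number of oscillations, presumably via a Pohozaev-type identity or an energy/amplitude estimate showing that the time between consecutive zeros stays bounded while the solution traverses a large range; in case $(A1)$ the compactness of $(\gamma_*^-,\gamma_*]$ and $(f_4)$ make this easier but one still needs that solutions starting very close to $\gamma_*$ (where $f(\gamma_*)=0$, $F(\gamma_*)>F(s)$) spend a long ``slow'' phase near $\gamma_*$ and thereafter oscillate many times before the energy drops below $F(\delta)$-type thresholds. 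The other delicate point is the precise definition of $k_0$ and the verification that \emph{both} boundary solutions land in $(\beta_*,\gamma_*)$ rather than at $\beta_*$ itself — this relies on $F(\bar\beta) > F(\beta_*^-)$ and the choice of $\bar\beta$ in Definition~\ref{constants}(iii) to guarantee the slow region near $\beta_*$ is nontrivially to the left of where the $k$-node bound states live.
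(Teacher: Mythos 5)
Your overall shooting framework (decreasing energy, openness of the ``fast'' sets, arbitrarily many zeros as $d\to\gamma_*$ via a Pohozaev-type estimate using $(f_5)$, cf.\ Lemma \ref{nn2}) matches the paper's setup, but there are two genuine gaps. First, your classification of behaviours is not exhaustive in the way your boundary argument needs: besides bound states, escaping solutions and fast oscillators, a trajectory can turn around and converge to a positive local maximum $\gamma_i$ of $F$ with $F(\gamma_i)>0$ (the sets $\Upsilon_k$ and $\mathcal S_k$ of the paper), and this is exactly where boundary points of your sets $S_j$ can land. A boundary point of $S_k$ not interior to $S_{k-1}$ therefore need not be a $k$-node bound state. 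Ruling this out is the main technical work of the paper: Lemmas \ref{gamma-vec} and \ref{p2m}(ii)--(iii) show that each such limit point forces a strict drop in the value $F(\gamma_i)$ attained, and $(f_4)$ (finitely many zeros of $f$) makes this descent terminate at $\gamma_0=0$, producing the first nonempty $\mathcal G_{m_1}$; it is this descent, not a zero count near $\beta_*$, that determines $k_0$.

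Second, and more seriously, your mechanism for obtaining \emph{two} solutions per $k$ does not work. The ``from below'' boundary point near $\beta_*$ does not exist in general: for $d$ slightly above $\beta_*$ the energy drops below $F(\gamma_M)=F(\beta_*)$ while $u\in(\gamma_M,\beta_*)$, so the solution is trapped in that interval with \emph{no} sign changes and no decay to zero --- it is not a bound state --- and Theorem \ref{main1} shows that for small $k$ there may be no $k$-node bound states with initial value in $(\beta_*,\gamma_*)$ at all. The paper's second solution comes from a different source, and this is where hypothesis $(f_3)$ (a positive local maximum of $F$ at $\gamma_1$) really enters: once one bound state $\bar\alpha\in\mathcal G_{m_1}$ exists, Lemma \ref{p2m}(i) (an estimate on $H=r^{2(N-1)}I$) shows that nearby initial values which do cross zero $m_1$ times must afterwards be trapped in $(0,\gamma_1)$ or $(\gamma_{-1},0)$, i.e.\ they lie in $\mathcal Q_{m_1+1}$. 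Hence $\mathcal Q_{m_1+1}$ is a nonempty open set, its infimum and supremum (taken together with $\mathcal G_{m_1+1}$) are two \emph{distinct} points, and both lie in $\mathcal G_{m_1+1}$ by the boundary lemma (Lemma \ref{front}(ii)); iterating gives two $k$-node bound states for every $k\ge m_1+1$. Your proposal contains no analogue of this trapping-near-zero step, so as written it yields at most one solution per $k$.
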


Note that for any $i>1$ there exists $\gamma_i^-<0$ such that the restriction of $f$ to the interval $(\gamma_i^-,\gamma_i]$ satisfies condition $(A1)$, and similarly for $i<-1$.
 Also, from the results in \cite{cghy4}, it follows that for any $k\in\mathbb N\cup\{0\}$,  there exists at least one solution $u$ of \eqref{eq2}, with initial value in $(\beta_1,\gamma_1)$, having exactly $k$ sign changes in $(0,\infty)$.
 Hence we immediately obtain the following corollary:
\medskip

\noindent{\bf Corollary.}
Assume that $f$ satisfies either assumptions $(A1)$ or $(A2)$. Then there exists $k_0\in\mathbb N\cup\{0\}$ such that for any $k\ge k_0$, there exist at least $2M+1$ solutions of \eqref{eq2} with a positive initial value, and  at least $2\bar M-1$ solutions of \eqref{eq2} with a negative initial value,  having exactly $k$ sign changes in $(0,\infty)$.
\medskip

Our next result shows that bound states with initial value in $(\beta_*,\gamma_*)$ need not exist for every $k\in\mathbb N\cup\{0\}$:

\begin{theorem}\label{main1}

Let  $\bar u$ denote the largest point in $(\beta_*^-,0)$ such that $F(\bar u)=F(\gamma_1)$. If either $f$ satisfies assumptions $(A1)$ and
\beq\label{t13}-\min_{s\in[\beta_*^-,\beta_{*}]}F(s)<\frac{(\beta_*-\gamma_1)}{2(N-1)(k+1)}\frac{F(\gamma_1)}{\gamma_1-\bar u}-F(\gamma_*),
\eeq
or $f$ satisfies $(A2)$ and
\beq\label{t13b}-\min_{s\in[\beta_*^-,\beta_{*}]}F(s)<\frac{(\beta_*-\gamma_1)}{2(N-1)(k+1)}\frac{F(\gamma_1)}{\gamma_1-\bar u}-\sup_{s\in[0,\alpha_k]}F(s),
\eeq
where $\alpha_k$ is defined in Lemma \ref{nn2},
then there are no solutions $u$ of \eqref{eq2}, with initial value in $(\beta_*,\gamma_*)$, having exactly $j$ sign changes in $(0,\infty)$ for any $j=0,1,\ldots, k$.

\end{theorem}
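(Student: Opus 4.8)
The plan is a contradiction argument built on the monotonicity of the radial energy: along any bound state the \emph{total} dissipated energy equals $F(u(0))$, while the energy dissipated over a single nodal region admits a uniform upper bound; if the number of nodal regions is too small, these two facts are incompatible, and ``too small'' is exactly quantified by \eqref{t13}--\eqref{t13b}.

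Suppose, for contradiction, that for some $j\in\{0,1,\dots,k\}$ there is a solution $u$ of \eqref{eq2} with $\alpha:=u(0)\in(\beta_*,\gamma_*)$ having exactly $j$ sign changes, at $0<z_1<\dots<z_j$; set $z_0=0$ and $z_{j+1}=\infty$. Let $E(r)=\tfrac12u'(r)^2+F(u(r))$; by \eqref{eq2}, $E'(r)=-\tfrac{N-1}{r}u'(r)^2\le0$, so $E$ is nonincreasing, $E(0)=F(\alpha)$, and since $E$ is also bounded below it has a limit $L\ge0$ as $r\to\infty$; as $F(u(r))\to0$ this forces $u'(r)^2\to 2L$, and $L>0$ would contradict $u(r)\to0$, so $L=0$. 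Hence
\[
F(\alpha)=\sum_{i=0}^{j}\int_{z_i}^{z_{i+1}}\frac{N-1}{r}\,u'(r)^2\,dr .
\]
By Definition \ref{constants}, $\alpha>\beta_*$, $F(\beta_*)=F(\gamma_M)$, and $\beta_*$ is the last point at which $F$ attains the level $F(\gamma_M)$ before $\gamma_*$, while $F(\gamma_M)<F(\gamma_*)$ (by $(f_2)$) and $F(\gamma_1)\le F(\gamma_M)$; continuity then gives $F>F(\gamma_M)\ge F(\gamma_1)$ on $(\beta_*,\gamma_*)$, so $F(\alpha)>F(\gamma_1)>0$. Next comes a uniform speed bound: set $P:=F(\gamma_*)-\min_{[\beta_*^-,\beta_*]}F$ (and, under $(A2)$, $P:=\sup_{[0,\alpha_k]}F-\min_{[\beta_*^-,\beta_*]}F$, after invoking Lemma \ref{nn2} to bound the range of $u$ by $\alpha_k$); one checks that $F(u(r))\ge\min_{[\beta_*^-,\beta_*]}F$ for all $r$, because whenever $u(r)\notin[\beta_*^-,\beta_*]$ one has, by the maximality/minimality in Definition \ref{constants}(ii), that $F(u(r))$ exceeds $F(\gamma_M)$ or $F(\gamma_{\bar M})$, and both $\gamma_M$ and $\gamma_{\bar M}$ lie in $[\beta_*^-,\beta_*]$. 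Therefore $\tfrac12u'(r)^2=E(r)-F(u(r))\le F(\alpha)-\min_{[\beta_*^-,\beta_*]}F\le P$, i.e. $u'(r)^2\le 2P$ on $[0,\infty)$.

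The heart of the matter is the per--nodal--region estimate
\[
\int_{z_i}^{z_{i+1}}\frac{N-1}{r}\,u'(r)^2\,dr\ \le\ \frac{2(N-1)(\gamma_1-\bar u)}{\beta_*-\gamma_1}\;P ,\qquad i=0,1,\dots,j .
\]
For $i\ge1$ one starts from $\int_{z_i}^{z_{i+1}}\tfrac{N-1}{r}u'^2\le\tfrac{N-1}{z_i}\int_{z_i}^{z_{i+1}}u'^2\le\tfrac{2(N-1)P}{z_i}(z_{i+1}-z_i)$, so the estimate reduces to the geometric inequality $(z_{i+1}-z_i)/z_i\le(\gamma_1-\bar u)/(\beta_*-\gamma_1)$ --- consecutive zeros of $u$ cannot spread out too fast relative to their location; this is where the constants $\gamma_1,\bar u,\beta_*$ genuinely enter, and it should be extracted from a Sturm/Gronwall-type analysis of \eqref{eq2} on a nodal region, using the speed bound $u'^2\le 2P$ together with the fact that near $0$ the linearization of \eqref{eq2} is nonoscillatory (so $|u|$ cannot linger in $(\bar u,\gamma_1)$) and that on $(\bar u,\gamma_1)$ one has $F<F(\gamma_1)$. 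The first region $[0,z_1]$ (and, when $j=0$, the single region $[0,\infty)$, where $u>0$ descends from $\alpha>\beta_*$) needs a companion argument handling the removable singularity of $\tfrac{N-1}{r}u'^2$ at $r=0$ and using that $u$ must traverse $[\gamma_1,\beta_*]$ during its first descent. Granting this, summing over the $j+1\le k+1$ regions yields
\[
F(\gamma_1)<F(\alpha)\le (k+1)\,\frac{2(N-1)(\gamma_1-\bar u)}{\beta_*-\gamma_1}\,P
=\frac{2(N-1)(k+1)(\gamma_1-\bar u)}{\beta_*-\gamma_1}\Bigl(F(\gamma_*)-\min_{[\beta_*^-,\beta_*]}F\Bigr)
\]
(with $\sup_{[0,\alpha_k]}F$ in place of $F(\gamma_*)$ under $(A2)$), and dividing through gives $-\min_{[\beta_*^-,\beta_*]}F>\frac{(\beta_*-\gamma_1)}{2(N-1)(k+1)}\frac{F(\gamma_1)}{\gamma_1-\bar u}-F(\gamma_*)$, the exact negation of \eqref{t13} (resp. \eqref{t13b}). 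This contradiction proves the theorem. The principal obstacle is precisely the per--region dissipation bound --- equivalently, the control on the spacing of consecutive zeros, and its analogue for the first descent --- whereas the energy bookkeeping, the identification $F(\alpha)>F(\gamma_1)$, and the pointwise speed bound are routine.
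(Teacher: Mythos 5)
Your overall architecture --- argue by contradiction, use the dissipation identity $I'(r)=-(N-1)|u'|^2/r$, establish the uniform speed bound $|u'|^2\le 2P$ with $P=F(\gamma_*)-\min_{[\beta_*^-,\beta_*]}F$ (resp.\ with $\sup_{[0,\alpha_k]}F$ under $(A2)$), and count contributions region by region --- is the same as the paper's. But the step you yourself call ``the heart of the matter'' is exactly the step you do not prove, and the reduction you propose for it would fail. You estimate $\int_{z_i}^{z_{i+1}}\frac{N-1}{r}|u'|^2\,dr\le \frac{2(N-1)P}{z_i}(z_{i+1}-z_i)$ and then need the zero-spacing inequality $(z_{i+1}-z_i)/z_i\le(\gamma_1-\bar u)/(\beta_*-\gamma_1)$. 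Nothing in the hypotheses yields such a bound: a nodal interval can be arbitrarily long compared with its left endpoint (the solution may creep where $f$ and $u'$ are small, and $Z_j$ may even be approached with $u'\to0$), and the crude bound $\int |u'|^2\le 2P\,(z_{i+1}-z_i)$ is wasteful in precisely that regime. So, as written, there is a genuine gap.

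The paper closes it by bounding a different quantity, and the fix explains where each constant in \eqref{t13} actually comes from. One peels off a single factor $|u'|\le\sqrt{2P}$ so that $\int\frac{|u'|^2}{r}\,dr\le\frac{\sqrt{2P}}{r}\int|u'|\,dr$; on each monotone piece $\int|u'|\,dr$ is the \emph{oscillation} of $u$, not the length of the interval, and after the last radius $r_{\gamma_1}$ at which $F(u)=F(\gamma_1)$ the solution is confined to $(\bar u,\gamma_1)$, so each piece contributes at most $\gamma_1-\bar u$ --- that is where $\gamma_1-\bar u$ enters. The factor $1/r$ is then bounded by $1/r_{\gamma_1}^1$, where $r_{\gamma_1}^1$ is the \emph{first} crossing of $\gamma_1$: since $u$ must travel from $\alpha>\beta_*$ down to $\gamma_1$ at speed at most $\sqrt{2P}$, one has $r_{\gamma_1}^1\ge(\beta_*-\gamma_1)/\sqrt{2P}$ --- that is where $\beta_*-\gamma_1$ enters, as a lower bound on the radius, not as a control on the spacing of zeros. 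Combining, $F(\gamma_1)\le I(r_{\gamma_1})=(N-1)\int_{r_{\gamma_1}}^{Z_j}\frac{|u'|^2}{r}\,dr\le \frac{2(N-1)P\,j\,(\gamma_1-\bar u)}{\beta_*-\gamma_1}$ with $j\le k+1$ monotone pieces, contradicting \eqref{t13} (resp.\ \eqref{t13b} after invoking Lemma \ref{nn2} to restrict to $\alpha<\alpha_k$). Note also that the dissipation is compared with $I(r_{\gamma_1})\ge F(\gamma_1)$ rather than with $F(\alpha)$, so the behaviour of $|u'|^2/r$ at $r=0$, which you spend effort on, never arises.
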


In our last result we give a sufficient condition so that $k_0=1$ in Theorem \ref{main0}.
In order to state it we define
\ben
\bar F:=-\min_{s\in[0,\beta_1]}F(s)>0.
\een
$$A=\frac{\beta_*-\beta_1}{((F(\bar\beta)-F(\gamma_M)))^{1/2}}+ \Bigl(\frac{2N(\bar\beta-\beta_*)}{\min_{t\in[\beta_*,\bar\beta]}f(t)}\Bigr)^{1/2}\quad\mbox{and}\quad \bar I=F(\gamma_*),$$
if $f$ satisfies $(A1)$, and
$$A=\max\{1, \frac{\beta_*-\beta_1}{((F(\bar\beta)-F(\gamma_M)))^{1/2}}+ \Bigl(\frac{2N(\bar\beta-\beta_*)}{\min_{t\in[\beta_*,\bar\beta]}f(t)}\Bigr)^{1/2}\}$$
and
$$\bar I:=\Bigl(\frac{\bar C+1}{\bar C}\Bigr)^N\Bigl(2F(\bar\beta)+(\bar\beta-\beta_1)^2+\frac{1}{N}\Bigl(\sup_{s\in[\beta_1,\bar\beta]}Q(s)-\min_{s\in[s_0,\bar\beta]}Q(s)\Bigr)\Bigr)+\frac{(N-2)^2\bar\beta^2}{2\bar C^2}$$
if $f$ satisfies $(A2)$ where
$$\bar C:=2(N-1)\frac{\bar\beta-\beta_1}{F(\bar\beta)-F(\gamma_M)}(2(F(\bar\beta)-\min_{s\in[\beta_1,\beta_{*}]}F(s)))^{1/2},$$
 We have
\begin{theorem}\label{main2}  If  $f$ satisfies assumptions $(A1)$ or $(A2)$ and
\beq\label{caso1}
(\bar C+A)\bar I
<\frac{2^{1/2}(N-1)}{(\bar I+\bar F)^{1/2}}\int_0^{\beta_1} |F(s)|ds,
\eeq
then for any $k\in\mathbb N\cup\{0\}$ there exist at least two solutions $u$ of \eqref{eq2}, with initial value in $(\beta_*,\gamma_*)$, having exactly $k$ sign changes in $(0,\infty)$.
\end{theorem}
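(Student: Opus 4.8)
\medskip
\noindent\textbf{Plan of proof.} The strategy is to revisit the shooting construction underlying Theorem~\ref{main0} and to show that \eqref{caso1} is exactly the quantitative condition that pushes the threshold $k_0$ of that theorem down to its least possible value, so that the two--solution statement becomes valid for every $k\in\mathbb N\cup\{0\}$. For $d>0$ let $u(\cdot,d)$ be the solution of \eqref{eq2} with $u(0,d)=d$, $u'(0,d)=0$, and let $P(r,d)=\tfrac12\bigl(u'(r,d)\bigr)^2+F\bigl(u(r,d)\bigr)$ be the associated energy; since $\partial_rP=-\tfrac{N-1}{r}(u')^2\le0$, $P(\cdot,d)$ is nonincreasing, with $P(0,d)=F(d)$. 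For $d\in(\beta_*,\gamma_*)$ one has $F(d)\le\bar I$ --- with $\bar I=F(\gamma_*)$ under $(A1)$, while under $(A2)$ both the boundedness of $u(\cdot,d)$ and the larger value of $\bar I$ come from the choice of $\bar\beta$, hypothesis $(f_5)$ and the positivity of the Pohozaev functional $Q$ along the orbit. The preparatory lemmas of the paper show that $u(\cdot,d)$ performs a well--defined sequence of excursions across $u=0$, that the sign--change count $\mathcal N(d)$ depends on $d$ in a controlled way, and that a bound state with exactly $k$ sign changes occurs at those $d$ at which the current excursion decays to the origin instead of returning; Theorem~\ref{main0} then produces, for each $k\ge k_0$, at least two such $d$ in $(\beta_*,\gamma_*)$, where $k_0$ is essentially the least sign--change count realized over that interval. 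It therefore suffices to show that \eqref{caso1} forces $k_0$ down to $0$, i.e.\ that the relevant solution with initial value in $(\beta_*,\gamma_*)$ --- the one attached to the distinguished value $\bar\beta$ in that construction --- cannot change sign.

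The quantitative core is an elementary lower bound for the energy dissipated along a descent of $u(\cdot,d)$ that reaches (or attempts to reach) $u=0$ at a radius $r_1$. Changing the variable from $r$ to $u$ on the portion of the descent lying in $[0,\beta_1]$ gives $\int_0^{r_1}(u')^2\,dr=\int_0^{d}|u'(u)|\,du\ge\int_0^{\beta_1}\frac{(u'(u))^2}{|u'(u)|}\,du$; since $(u')^2=2\bigl(P-F(u)\bigr)\le2(\bar I+\bar F)$ while $(u')^2\ge-2F(u)=2|F(u)|$ on $(0,\beta_1)$, where $F<0$ and $P\ge0$ up to $r_1$, this yields
$$\int_0^{r_1}(u')^2\,dr\ \ge\ \frac{2^{1/2}}{(\bar I+\bar F)^{1/2}}\int_0^{\beta_1}|F(s)|\,ds ,$$
and then, using $\tfrac1r\ge\tfrac1{r_1}$, the dissipated energy obeys $P(0,d)-P(r_1,d)\ge\frac{2^{1/2}(N-1)}{r_1(\bar I+\bar F)^{1/2}}\int_0^{\beta_1}|F(s)|\,ds$. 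The constant $\bar C+A$ is, in turn, an a priori upper bound for the radius at which the relevant solution reaches $u=0$, the two terms of $A$ and the factor $\bar C$ accounting respectively for the radial time needed to descend through $[\beta_1,\bar\beta]$, to turn around in $(\gamma_*^-,0)$, and for the effect of the damping term $\tfrac{N-1}{r}$. Substituting $r_1\le\bar C+A$ into the dissipation bound and recalling $P(0,d)\le\bar I$, inequality \eqref{caso1} forces $P(r_1,d)<0$, which is impossible since $P(r_1,d)=\tfrac12(u'(r_1,d))^2\ge0$; hence this solution does not reach $u=0$ on that excursion and so gains no further sign change, and the continuity/intermediate--value argument of Theorem~\ref{main0} then yields $k_0=0$. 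As \eqref{caso1} does not involve $k$, the conclusion holds for every $k\in\mathbb N\cup\{0\}$.

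The main obstacle I anticipate is the matching of $\bar C+A$ (and, under $(A2)$, of $\bar I$) to the true geometry of the trajectory: one must control, uniformly over the relevant initial data, a lower bound for $r$ at each zero --- without which the dissipation $\tfrac{N-1}{r}(u')^2$ is invisible --- together with an upper bound for $P$ that keeps all constants independent of $d$, and in the infinite case one must simultaneously keep $Q>0$ along the orbit via $(f_5)$ and the definition of $\bar\beta$; this last requirement is precisely what inflates $\bar I$ into the expression built from $\bar C$, $\sup_{s\in[\beta_1,\bar\beta]}Q(s)$ and $\min_{s\in[s_0,\bar\beta]}Q(s)$. Verifying that this chain of otherwise routine estimates contracts to exactly \eqref{caso1}, with the stated values of $A$, $\bar I$ and $\bar C$, is where the real work lies.
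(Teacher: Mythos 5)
Your skeleton matches the paper's: reduce the theorem to showing $\mathcal Q_1\neq\emptyset$ (after which the inductive machinery of Theorem \ref{main0} produces two elements of $\mathcal G_k$ for every $k$), and obtain $\mathcal Q_1\neq\emptyset$ from an energy--dissipation estimate along the descent of $u$ through $(0,\beta_1)$, with $\bar C+A$ and $\bar I$ as bounds on the relevant radius and energy. But there are two genuine gaps. First, your core estimate bounds the dissipation by $\frac{N-1}{r_1}\int_0^{r_1}(u')^2\,dr$ with $r_1$ the radius of the (hypothetical) first zero, and then you substitute $r_1\le\bar C+A$. No such bound on $r_1$ is available: the constants of the paper control only $r_{\beta_1}$, the radius at which $u$ crosses $\beta_1$, and the traversal from $\beta_1$ down to $0$ can take arbitrarily long (indeed infinitely long for a ground state), in which case $\frac{1}{r_1}\int_0^{r_1}(u')^2\,dr$ gives nothing. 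The paper avoids this by working with the weighted energy $H(r)=r^{2(N-1)}I(r)$, whose derivative $2(N-1)r^{2N-3}F(u)$ is negative precisely while $u\in(0,\beta_1)$; since $r^{2N-3}\ge r_{\beta_1}^{2N-3}$ there, only $r_{\beta_1}$ enters, and the condition \eqref{need}, namely $r_{\beta_1}I(r_{\beta_1})<\frac{2(N-1)}{(2(I(r_{\beta_1})+\bar F))^{1/2}}\int_0^{\beta_1}|F(s)|\,ds$, forces $I(Z_1)<0$, hence a positive minimum below $\beta_1$ and $\alpha^*\in\mathcal Q_1$. Your unweighted version does not close.

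Second, you treat ``$r_{\beta_1}\le\bar C+A$ and $I\le\bar I$ for the relevant solution'' as an a priori fact, but it is false for generic $\alpha\in(\beta_*,\gamma_*)$ (e.g.\ $r_{\bar\beta}(\alpha)\to\infty$ as $\alpha\to\gamma_*$ under $(A1)$, and $F(\alpha)$ is unbounded under $(A2)$). The bulk of the paper's proof is the \emph{selection} of a specific $\alpha^*$: under $(A1)$ one chooses $\alpha^*$ with $r_{\bar\beta}(\alpha^*)=\bar C$ by the intermediate value theorem, and Lemma \ref{gstlema} then guarantees both that the solution really does cross $\beta_1$ (rather than turning around in $(\beta_1,\beta_*)$ and landing in $\mathcal S_1$) and that $r_{\beta_1}\le\bar C+A$ with $I(r_{\beta_1})\le F(\gamma_*)=\bar I$; under $(A2)$ one must instead select $\bar\alpha$ through the level set $E(r_{\bar\beta}(\bar\alpha),\bar\alpha)=M$ of the Pohozaev functional and split into the cases $r_{\bar\beta}\ge\bar C$ and $r_{\bar\beta}<\bar C$, which is exactly what produces the complicated expression for $\bar I$. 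You name this as ``where the real work lies'' but do not carry any of it out, so the proposal as written does not constitute a proof.
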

 \begin{remark}\rm
 If $f$ satisfies $(A2)$ and
 \ben
F_\infty:= \lim\limits_{s\to\infty}F(s)<\infty,
\een
then the above theorem holds with $\bar I=F_\infty$.
 \end{remark}
We will obtain our results through a careful study of the  initial value problem
\begin{equation}\label{ivp}
\begin{gathered}
u''+\frac{N-1}{r}u'+f(u)=0,\quad r>0,\quad N\ge2,\\
u(0)=\alpha,\quad u'(0)=0,
\end{gathered}\end{equation}
for $\alpha\in(\beta_*,\gamma_*)$. By a solution to \eqref{ivp} we mean a $C^1$ function $u$ such that $u'$ is also $C^1$ in its domain and we denote such a solution by $u(\cdot,\alpha)$.

 The idea of the proof of our multiplicity result is to define the set
$\mathcal Q_1$ as the set of initial values $\alpha>\beta_*$ such that the corresponding solution $u(\cdot,\alpha)$ of \eqref{ivp} is strictly positive and $\inf_{r>0} u(r,\alpha)\in(0,\beta_1)$.
We extend this definition to the similar  sets $\mathcal Q_k$ when the solution $u(\cdot,\alpha)$ of \eqref{ivp} has exactly $k-1$ zeros. By continuous dependence of solutions in the initial data, it will follow that $\mathcal Q_k$ is an open set. Let $\mathcal G_k$  be the set of initial values $\alpha>\beta_*$ such that the corresponding solution $u(\cdot,\alpha)$ is a solution of \eqref{eq2} having exactly $k-1$ simple zeros in $(0,\infty)$.

In some of previous works concerning existence of solutions, see for example \cite{fls, gst} for ground states and \cite{cghy4, cdghm} for higher order bound states having a prescribed number of nodes, the conditions on $f$ imply that $F$ does not possess a positive local maximum, hence $\mathcal Q_k$ in nonempty for all $k$ and $\sup(\mathcal Q_k\cup\mathcal G_k)\in\mathcal G_k$. On the other hand, $\inf(\mathcal Q_k\cup\mathcal G_k)$ in general does not belong to $\mathcal G_k$, in fact there are cases for which there is uniqueness, that is, $\mathcal G_k$ is a singleton.

The presence of a positive local maximum for $F$ ($(f_3)$ in our assumptions) will guarantee that if $\mathcal Q_k$ is nonempty,
then $\inf(\mathcal Q_k\cup\mathcal G_k)$ and $\sup(\mathcal Q_k\cup\mathcal G_k)$ are different and belong to $\mathcal G_k$.  Theorem \ref{main0} will follow once we have proved that $\mathcal Q_k$ is nonempty for $k$ large enough. A striking difference with the case for which $F$ does not possess a positive local maximum is that now $\mathcal Q_1$ can be empty. This result is contained in Theorem \ref{main1}. Finally, in Theorem \ref{main2} we give conditions on $f$ so that $\mathcal Q_1\not=\emptyset$.
\medskip

This paper is organized as follows. In section \ref{prel}, we establish some properties of the solutions to \eqref{ivp}, we restrict its domain  to the set of unique extendibility,   define some crucial sets of initial values and prove some crucial results concerning the solutions having initial value in these sets.  Then in section \ref{exist} we prove our main result. Finally in the Appendix we prove a non-oscillation result for the solutions of \eqref{ivp}.

\section{Some properties of the solutions of the initial value problem}\label{prel}

The aim of this section is to establish several properties of the solutions to the initial value problem \eqref{ivp}.
Since $f$ is continuous, problem \eqref{ivp} has a solution defined for all $r\ge 0$ for any $\alpha>\beta_*$  but it might not be uniquely defined. It is straight forward to see that unique extendibility can be lost only if $u$ reaches a double zero.

\begin{defi}\label{domain}
The domain $D$ of definition of $u$ will be the domain of unique extendibility.
\end{defi}

 That is, $D=(0,D_\alpha)$,
where if
$
D_\alpha<\infty$,
then $D_\alpha$ is a double zero of $u$.

 By standard theory of ordinary differential equations, the solution depends continuously on the initial data in any compact subset of its domain of definition.

We start by stating without proof the following basic proposition. The proof of $(i)$ and $(iii)$ can be found in \cite[Proposition 2.3]{cghy4} and the proof of $(ii)$ can be found in \cite[Proposition 3.4]{dghm}. A proof of $(iv)$ under other assumptions can be found in \cite{cdghm}, we include a proof of it under the new assumptions in the Appendix. These proofs are based on properties of the well known energy functional
\begin{eqnarray*}
I(r,\alpha)=\frac{|u'(r,\alpha)|^2}{2}+F(u(r,\alpha))\end{eqnarray*}
for which we have
\beq\label{Ider}
I'(r,\alpha)=-(N-1)\frac{|u'(r,\alpha)|^2}{r}.
\eeq
\begin{proposition}\label{basic} Let $f$ satisfy $(f_1)$-$(f_2)$ in either $(A1)$ or $(A2)$  and let $u(\cdot,\alpha)$ be a solution of \eqref{ivp}.
\begin{enumerate}
\item[(i)] There exists $C(\alpha)>0$ such that $|u(r,\alpha)|+|u'(r,\alpha)|\le C(\alpha)$.
\item[(ii)] $\lim_{r\to\infty}I(r,\alpha)$ exists and is equal to $F(\ell)$, where $\ell$ is a zero of $f$.
\item[(iii)]
If  $u(\cdot,\alpha)$ is   defined in $[0,\infty)$ and $\lim_{r\to \infty}u(r,\alpha)=\ell$, then
$$\quad \lim_{r\to \infty}u'(r,\alpha)=0\quad \mbox{and}\quad \ell\ \mbox{is a zero of $f$}.$$
\item[(iv)] Assume further that $f$ satisfies $(f_4)$ of either $(A1)$ or $(A2)$. Then $u$ has at most a finite number of sign changes.
\end{enumerate}
\end{proposition}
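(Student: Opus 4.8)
Statements (i)--(iii) are quoted from the literature, so the only point needing a fresh argument is the non-oscillation property (iv); I outline how I would prove it. The plan is a contradiction argument driven by the energy identity \eqref{Ider} and by $(f_4)$, i.e.\ the fact that $f$ has only finitely many, sign-changing, zeros outside $[-\delta,\delta]$. Suppose $u=u(\cdot,\alpha)$ changes sign infinitely often, at $0<r_1<r_2<\cdots$, and on each $(r_k,r_{k+1})$ pick a point $\rho_k$ where $|u|$ is maximal, so $u'(\rho_k,\alpha)=0$ and $I(\rho_k,\alpha)=F(u(\rho_k,\alpha))$. By \eqref{Ider}, $I(\cdot,\alpha)$ is non-increasing, and by part (ii) it decreases to $F(\ell)$ with $f(\ell)=0$. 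Since $u(r_k,\alpha)=0$, we get $I(r_k,\alpha)=\tfrac12|u'(r_k,\alpha)|^2\to F(\ell)$, so $F(\ell)\ge 0$, hence $I(r,\alpha)\ge F(\ell)\ge 0$ for all $r$. Two consequences: wherever $0<|u(r,\alpha)|<\delta$ one has $|u'(r,\alpha)|^2=2\bigl(I(r,\alpha)-F(u(r,\alpha))\bigr)\ge-2F(u(r,\alpha))>0$ by $(f_2)$, so $u$ has \emph{no} critical point in the strip $\{0<|u|<\delta\}$; and $F(u(\rho_k,\alpha))=I(\rho_k,\alpha)\downarrow F(\ell)\ge 0$, so by $(f_2)$ once more $|u(\rho_k,\alpha)|\ge\delta$, the extremal values being bounded by part (i). If $D_\alpha<\infty$, the $r_k$ would accumulate at the double zero $D_\alpha$ of Definition \ref{domain}, and then $|u'|$ would blow up between $r_k$ and $\rho_k$ (as $|u|$ rises from $0$ to at least $\delta$ over intervals of vanishing length), contradicting part (i); hence $r_k\to\infty$.

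Next I would establish two estimates. First, a uniform lower bound on the ``action'' of each bump,
\[
\int_{r_k}^{r_{k+1}}|u'(r,\alpha)|^2\,dr\ \ge\ c_0:=2\int_0^{\delta}\sqrt{-2F(s)}\,ds\ >\ 0 ,
\]
obtained because, by the no-critical-point observation, $u$ is strictly monotone across each of the two crossings of the strip $\{|u|<\delta\}$ in the $k$-th bump, so on each crossing $\int|u'|^2\,dr=\int|u'|\,du=\int\sqrt{2\bigl(I-F(u)\bigr)}\,du\ge\int_0^{\delta}\sqrt{-2F(u)}\,du$. Second, a uniform bound $r_{k+1}-r_k\le T$ on the bump lengths for all large $k$. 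Granting both, \eqref{Ider} gives
\[
\sum_{k\ge1}\frac{c_0}{r_{k+1}}\ \le\ \sum_{k\ge1}\int_{r_k}^{r_{k+1}}\frac{|u'(r,\alpha)|^2}{r}\,dr=\int_{r_1}^{\infty}\frac{|u'(r,\alpha)|^2}{r}\,dr=\frac{I(r_1,\alpha)-F(\ell)}{N-1}<\infty ,
\]
so $\sum_k 1/r_{k+1}<\infty$; but the bound on bump lengths forces $r_{k+1}\le r_1+kT$, hence $\sum_k 1/r_{k+1}=\infty$ --- a contradiction, proving (iv).

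The main obstacle is the bump-length bound. For large $r$ the equation reads $u''+f(u)=-\tfrac{N-1}{r}u'$ with a forcing term tending to $0$ (by part (i)), so on each $(r_k,r_{k+1})$ the solution shadows an orbit of the autonomous equation $u''+f(u)=0$ at energy near $F(\ell)$, with amplitudes confined to $[\delta,C(\alpha)]$; such an orbit has bounded period as long as its turning points avoid the critical set of $F$, which is \emph{finite} by $(f_4)$ and consists of strict local extrema. One therefore has to rule out that the extremal values $u(\rho_k,\alpha)$ accumulate at a critical point of $F$: at a maximum the sign of $u''(\rho_k,\alpha)=-f(u(\rho_k,\alpha))$ pins down on which side of such a point the $u(\rho_k,\alpha)$ must lie, and combining this with $F(u(\rho_k,\alpha))\downarrow F(\ell)$ and the \emph{strict} decrease of $I$ gives a contradiction when $F(\ell)>0$. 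The residual case $F(\ell)=0$ is the most delicate, since then $u'(r_k,\alpha)\to 0$ and the shadowed orbit could a priori be homoclinic to a zero of $f$ with $F=0$, so that its period blows up; it is excluded by a closer look at the geometry of $F$ near $0$ and near such zeros (using $(f_2)$ and $(f_4)$, and in the case $(A2)$ the behaviour of $f$ at infinity). Everything else --- the energy bookkeeping, the action estimate, and the reduction to $r_k\to\infty$ --- is routine.
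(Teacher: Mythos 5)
Your reduction of the problem to part (iv) is appropriate (the paper likewise cites the literature for (i)--(iii)), and your preliminary observations for (iv) --- $F(\ell)\ge 0$, the extremal values staying outside $(-\delta,\delta)$, $r_k\to\infty$, and the per-bump action bound $\int_{r_k}^{r_{k+1}}|u'|^2\,dr\ge c_0$ --- are correct; the final contradiction via divergence of $\sum_k 1/r_{k+1}$ would indeed work. But the whole argument hinges on the uniform bound $r_{k+1}-r_k\le T$, which you yourself flag as ``the main obstacle'' and then only sketch. That is a genuine gap, and it is exactly the hard part of the statement. Two things go wrong in the sketch. First, to get a bounded period for the shadowed autonomous orbit near its turning points you must show that the extremal values $u(\rho_k,\alpha)$ converge to points $\mu^{\pm}$ with $f(\mu^{\pm})\neq 0$; ruling out accumulation at a local maximum $\gamma$ of $F$ cannot be done from the sign of $u''(\rho_k)=-f(u(\rho_k))$ alone (that only places $u(\rho_k)$ on one side of $\gamma$, which is perfectly consistent with $F(u(\rho_k))\downarrow F(\gamma)$) --- the paper has to invoke the full weighted-energy argument of Lemma \ref{p2m}(ii) here. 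Second, even granting that, the period need not be bounded: $F$ may attain the value $F(\ell)$ at interior local maxima $s^*\in(\mu^-,\mu^+)$ (necessarily zeros of $f$), and since $|u'|^2=2(I-F(u))$ with $I\downarrow F(\ell)$, the solution can linger near such an $s^*$ for longer and longer times, so the bump lengths can a priori be unbounded. The case $F(\ell)=0$ that you single out is not resolved either.

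The paper's proof is designed precisely so that no bound on the total bump length is needed. It uses $\tilde H(r)=r^{2(N-1)}(I(r)-F(\ell))$, with $\tilde H'(r)=2(N-1)r^{2N-3}(F(u(r))-F(\ell))$, and requires only two quantitative inputs: (a) an upper bound on the time spent where $F(u)>F(\ell)$, i.e.\ near the tops and bottoms of the bumps, obtained from $|u''|\ge \bar f/2$ there once the extrema are known (via Lemma \ref{p2m}(ii)) to converge to $\mu^{\pm}$ where $f\neq 0$, combined with $F(u(r_n^M))-F(\ell)\to 0$; and (b) a fixed negative contribution $-k_0k_1 r_{2,n}^{2N-3}$ from each crossing of the strip $\delta/4\le u\le\delta/2$, where $F(u)-F(\ell)\le -k_0<0$ by $(f_2)$. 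Everything in between contributes nonpositively to $\tilde H'$ and is simply discarded, so arbitrarily long sojourns near interior equilibria at level $F(\ell)$ are harmless. Summing over $n$ gives $\tilde H\to-\infty$, contradicting $I\ge F(\ell)$. To complete your route you would essentially have to reprove Lemma \ref{p2m}(ii) and in addition exclude the interior near-homoclinic passages; I suggest instead switching to the weighted energy $\tilde H$, which renders both issues moot.
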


Let us set
$$Z_1(\alpha):=\sup\{r>0\ |\ u(s,\alpha)>0\mbox{ and }u'(s,\alpha)<0\ \mbox{ for all }s\in(0,r)\}$$
and define
\begin{eqnarray*}
{\mathcal N_1}&=&\{\alpha\in[\beta_*,\gamma_*)\ :\ u(Z_1(\alpha),\alpha)=0\quad\mbox{and}\quad u'(Z_1(\alpha),\alpha)<0\}\\
{\mathcal G_1}&=&\{\alpha\in[\beta_*,\gamma_*)\ :\ u(Z_1(\alpha),\alpha)=0\quad\mbox{and}\quad u'(Z_1(\alpha),\alpha)=0\}\\
{\mathcal P_1}&=&\{\alpha\in[\beta_*,\gamma_*)\ :\ u(Z_1(\alpha),\alpha)>0\},
\end{eqnarray*}
where $\beta_*$ is as defined in Definition \ref{constants}(ii), and we recall $\gamma_*=\infty$ in case $f$ satisfies $(A2)$. We now extend these definitions by induction for $k\ge2$.

If ${\mathcal N_{k-1}}\not=\emptyset$, we set
$${\mathcal F}_k=\{\alpha\in\mathcal N_{k-1}\ :\ (-1)^ku'(r,\alpha)\le 0\quad\mbox{for all }r> Z_{k-1}(\alpha)\}.$$
For $\alpha\in \mathcal N_{k-1}\setminus{\mathcal F}_k$, we set
\ben
T_{k-1}(\alpha):&=&\sup\{r\in(Z_{k-1}(\alpha),D_\alpha)\ :\ (-1)^ku'(r,\alpha)\le 0\},
\een
and for $\alpha\in{\mathcal F}_k$, we set $T_{k-1}(\alpha)=\infty$.

Next, for $\alpha\in \mathcal N_{k-1}\setminus {\mathcal F}_k$, we define the extended real number
\begin{eqnarray*}
Z_k(\alpha):=\sup\{r>T_{k-1}(\alpha)\ |\ (-1)^ku(s,\alpha)<0\mbox{ and }(-1)^ku'(s,\alpha)>0\ \\
\mbox{ for all }s\in(T_{k-1}(\alpha),r)\},
\end{eqnarray*}
and again if $\alpha\in{\mathcal F}_k$, we set $Z_k(\alpha)=\infty$.
\begin{figure}[h]
\begin{center}
 \includegraphics[keepaspectratio, width=13cm]{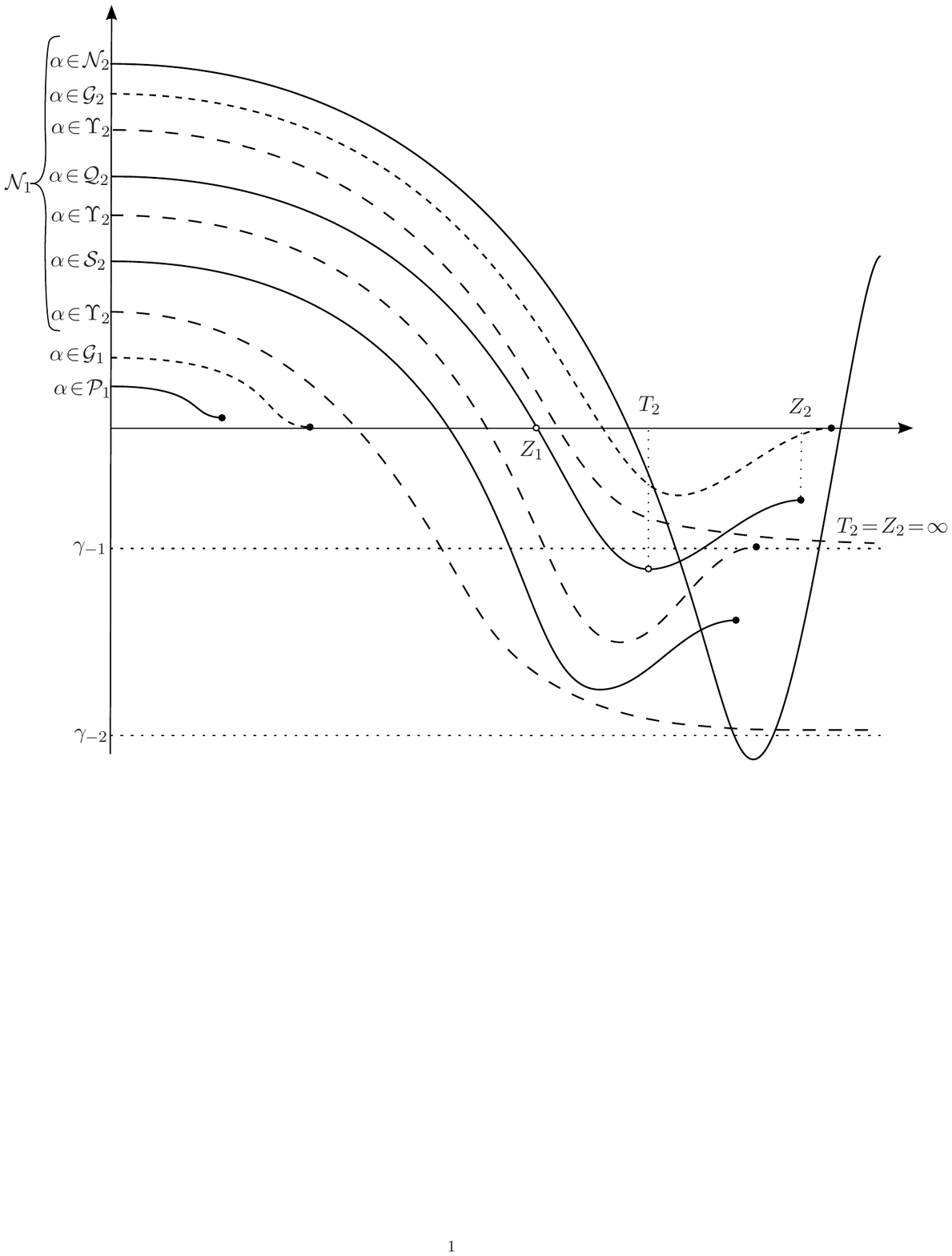}
 \end{center}
 \caption{Solutions of \eqref{ivp} with initial condition in these sets}
\end{figure}

We now define
\begin{eqnarray*}
{\mathcal N_k}&=&\{\alpha\in\mathcal N_{k-1}\setminus{\mathcal F}_k\ :\ u(Z_k(\alpha),\alpha)=0\quad\mbox{and}\quad (-1)^ku'(Z_k(\alpha),\alpha)>0\},\\
{\mathcal G_k}&=&\{\alpha\in\mathcal N_{k-1}\setminus {\mathcal F}_k\ :\ u(Z_k(\alpha),\alpha)=0\quad\mbox{and}\quad u'(Z_k(\alpha),\alpha)=0\},\\
{\mathcal P_k}&=&\{\alpha\in\mathcal N_{k-1}\ :\ (-1)^ku(Z_k(\alpha),\alpha)<0\}.
\end{eqnarray*}
Finally, for any $k\in\mathbb N$ we decompose $\mathcal P_k$ as follows:
$$\mathcal P_k=\mathcal Q_k\cup \mathcal S_k\cup \Upsilon_k,$$
where
\begin{eqnarray*}
\mathcal Q_k&=&\{\alpha\in\mathcal P_k\ :\ \gamma_{-1}<u(Z_k(\alpha),\alpha)<0 \quad\mbox{or}\quad 0<u(Z_k(\alpha),\alpha)<\gamma_1\}\\
\mathcal S_k&=&\bigcup_{i=\bar M-1,\ i\neq 0,-1}^{M}\{\alpha\in\mathcal P_k\ :\ \gamma_i<u(Z_k(\alpha),\alpha)<\gamma_{i+1}\}\\
\Upsilon_k&=&\bigcup_{i=\bar M,\ i\neq 0}^{M}\{\alpha\in\mathcal P_k\ :\ u(Z_k(\alpha),\alpha)=\gamma_i\}
\end{eqnarray*}
where the constants $\gamma_i$ are defined in Definition \ref{constants}(i).

It should be noticed that  if $\alpha\in\Upsilon_k$, then necessarily $Z_k(\alpha)=\infty$. Indeed, let $\alpha\in\Upsilon_k$ and assume $Z_k(\alpha)<\infty$. Then $u'(Z_k(\alpha),\alpha)=0$ and $u(Z_k(\alpha),\alpha)=\gamma_i$ for some $i\not=0$. By the unique solvability of \eqref{ivp} up to a double zero, it must be that  $u(r)\equiv \gamma_i$ for all $r\ge Z_k(\alpha)$. But then we can argue as in the proof of \cite[Proposition 1.3.1]{fls}
to obtain a contradiction to the fact that by the Lipschitz assumption on $f$, we have that
$$\int_{\gamma_i}\frac{du}{|F(\gamma_i)-F(u)|^{1/2}}=\infty.$$
\medskip

As the minima (maxima) of $u$ occur at values where $f(u)\le 0$ ($f(u)\ge 0$), it follows that if $\alpha\in \mathcal S_j\cup\mathcal Q_j$, with $\gamma_i<u(Z_j(\alpha),\alpha)<\gamma_{i+1}$, then $F(u(Z_j(\alpha),\alpha))<\min\{F(\gamma_i),F(\gamma_{i+1})\}$ and hence $\gamma_i<u(r,\alpha)<\gamma_{i+1}$ for all $r>Z_j(\alpha)$.

The rest of this section is devoted to the proof of some crucial properties of the sets defined above.
\medskip

\begin{lemma}\label{p2m}
Assume that $f$ satisfies $(A1)$ or $(A2)$ and let $k\in\mathbb N$.
\begin{enumerate}
\item[(i)]If $\bar\alpha\in\mathcal G_k$, then there exists a neighborhood $V$ of $\bar\alpha$ such that
if $\alpha\in V\cap\mathcal N_k$, then $\alpha\in \mathcal Q_{k+1}$
\item[(ii)]If $\bar\alpha$ is such that $u(Z_k(\bar\alpha))=\gamma$, with $\gamma$ a   local maximum of $F$ with $F(\gamma)\ge0$, then there exists a neighborhood $V$ of $\bar\alpha$ such that
if $\alpha\in V\cap\mathcal N_k$, then  $F(u(T_k(\alpha),\alpha))<F(\gamma)$.
\item[(iii)]Let $\bar\alpha$ be such that $u(Z_k(\bar\alpha))=\gamma$, with $\gamma$ a  local maximum of $F$ with $F(\gamma)\ge0$, and $\gamma_i<\gamma<\gamma_{i+1}$.
 Then there exists a neighborhood $V$ of $\bar\alpha$, such that if $\alpha\in V$, then $\alpha\in\mathcal N_{k-1}\setminus \mathcal N_k$ and either $u(Z_k(\alpha))=\gamma$ or there exists $r_1>0$ such that $\gamma_i<u(r_1,\alpha)<\gamma_{i+1}$ and $I(r_1,\alpha)<\min\{F(\gamma_i),F(\gamma_{i+1})\}$.

\end{enumerate}

\end{lemma}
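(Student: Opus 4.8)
My plan is to lean on two recurring tools. First, continuous dependence of $u(\cdot,\alpha)$, and hence of $Z_j(\alpha)$, $T_j(\alpha)$ and $I(\cdot,\alpha)$, on $\alpha$ over compact subsets of the domain of definition. Second, the dissipation identity \eqref{Ider}: since $I$ is nonincreasing and $I(r,\alpha)\ge F(u(r,\alpha))$, the trajectory can never visit a value $s$ with $F(s)>I(r,\alpha)$. In particular $F(\gamma_{-1})>0$ and $F(\gamma_1)>0$ act as barriers on the two sides of $0$, and, by $(f_2)$, $0$ sits at the bottom of a genuine well ($F<0$ on a punctured neighbourhood of $0$, and $F(\gamma)\ge 0$ is a strict local maximum).

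For (i), note that $\bar\alpha\in\mathcal G_k$ gives $u(Z_k(\bar\alpha),\bar\alpha)=u'(Z_k(\bar\alpha),\bar\alpha)=0$, so $I(Z_k(\bar\alpha),\bar\alpha)=F(0)=0$. By continuous dependence, if $\alpha\to\bar\alpha$ with $\alpha\in\mathcal N_k$ then $Z_k(\alpha)\to Z_k(\bar\alpha)$ and $\eta(\alpha):=I(Z_k(\alpha),\alpha)=\tfrac12|u'(Z_k(\alpha),\alpha)|^2\to0$; shrinking $V$ we may assume $\eta(\alpha)<\min\{F(\gamma_{-1}),F(\gamma_1)\}$, so that for $r>Z_k(\alpha)$ the solution stays in the component $J_{\eta(\alpha)}$ of $\{F<\eta(\alpha)\}$ containing $0$, and $J_{\eta(\alpha)}\subset(\gamma_{-1},\gamma_1)$. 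The crucial point is that, after shrinking $V$ further, the first critical point $T_k(\alpha)$ of $u(\cdot,\alpha)$ beyond $Z_k(\alpha)$ satisfies $I(T_k(\alpha),\alpha)\le0$. I would argue this by contradiction: if $F(u(T_k(\alpha),\alpha))=I(T_k(\alpha),\alpha)>0$ then $u(T_k(\alpha),\alpha)$ lies outside $\{F\le0\}$, so $u(\cdot,\alpha)$ sweeps a fixed compact interval inside $(-\delta,0)$, say $[-3\delta/4,-\delta/4]$, on which $|u'|^2=2(I-F(u))\ge 2|F(u)|$ is bounded away from $0$; by \eqref{Ider} this sweep dissipates at least a fixed amount $\kappa>0$ of energy, provided the corresponding $r$--values remain bounded --- which holds because $(f_2)$ controls how long $u(\cdot,\alpha)$ can linger near $0$ (in the compactly supported regime, via the convergence of $\int_0 du/|F(u)|^{1/2}$). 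Thus $\eta(\alpha)\ge\kappa$, contradicting $\eta(\alpha)\to0$. Once $I(T_k(\alpha),\alpha)\le0$, strict monotonicity of $I$ forces $I(r,\alpha)<0$ for all $r>T_k(\alpha)$, so $u(\cdot,\alpha)$ cannot return to $0$; hence it has exactly $k$ sign changes, $Z_{k+1}(\alpha)$ is an extremum (possibly infinite) whose value lies in $J_{\eta(\alpha)}$ on the appropriate side of $0$, i.e.\ in $(\gamma_{-1},0)$ or $(0,\gamma_1)$, so $\alpha\in\mathcal Q_{k+1}$.

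For (ii) and (iii), the hypothesis $u(Z_k(\bar\alpha),\bar\alpha)=\gamma$ with $\gamma$ a local maximum of $F$, $F(\gamma)\ge0$, forces (as after the definition of $\Upsilon_k$) $Z_k(\bar\alpha)=\infty$, so $u(r,\bar\alpha)\to\gamma$ and, by Proposition~\ref{basic}(ii), $I(r,\bar\alpha)\searrow F(\gamma)$; in particular $I(r,\bar\alpha)>F(\gamma)$ for every finite $r$. In case (ii), fix $\alpha\in V\cap\mathcal N_k$: by continuous dependence $u(\cdot,\alpha)$ first shadows $u(\cdot,\bar\alpha)$ into a small neighbourhood of $(\gamma,0)$ and then peels away and eventually crosses $0$ at $Z_k(\alpha)$. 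Since $I$ can only decrease once the trajectory leaves that neighbourhood, while $F(u)<F(\gamma)$ just off the level $\gamma$, one obtains $r_\#=r_\#(\alpha)$, past the peel-off, with $I(r,\alpha)<F(\gamma)$ for $r\ge r_\#$ (here $u(r_\#,\alpha)\neq\gamma$, since $\alpha\in\mathcal N_k$ rules out the trajectory sitting at $(\gamma,0)$, by uniqueness at $\gamma\neq0$). As the descent of $u(\cdot,\alpha)$ below $\gamma$ down to its $k$-th zero takes place after $r_\#$ --- it cannot cross either adjacent barrier first --- we get $Z_k(\alpha)>r_\#$, so $I(Z_k(\alpha),\alpha)<F(\gamma)$, and then $I(T_k(\alpha),\alpha)<I(Z_k(\alpha),\alpha)<F(\gamma)$ by strict monotonicity, i.e.\ $F(u(T_k(\alpha),\alpha))<F(\gamma)$. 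In case (iii), $\mathcal N_{k-1}$ is open because the first $k-1$ zeros of $u(\cdot,\bar\alpha)$ are transversal, so we may take $V\subset\mathcal N_{k-1}$; moreover $\gamma$, being a non-special local maximum lying strictly between the consecutive special maxima $\gamma_i,\gamma_{i+1}$ of Definition~\ref{constants}, satisfies $F(\gamma)\le F(\gamma_i)<F(\gamma_{i+1})$. For $\alpha\in V$, either $u(Z_k(\alpha),\alpha)=\gamma$ (the trajectory again asymptotes to $\gamma$), and we are done, or it peels off from $(\gamma,0)$; in the latter case the same argument as in (ii) produces $r_1$ past $r_\#$ with $u(r_1,\alpha)\in(\gamma_i,\gamma_{i+1})$ and $I(r_1,\alpha)<F(\gamma)\le\min\{F(\gamma_i),F(\gamma_{i+1})\}$, so $u(\cdot,\alpha)$ is trapped in $(\gamma_i,\gamma_{i+1})$ for $r\ge r_1$, never reaches $0$, and $\alpha\in\mathcal N_{k-1}\setminus\mathcal N_k$, as asserted.

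The step I expect to be the main obstacle is the quantitative dissipation estimate in (i) --- making rigorous, uniformly in $\alpha$, the assertion that $I$ becomes negative before $u(\cdot,\alpha)$ can come back to $0$. This requires controlling both the depth and the speed of the excursion of $u(\cdot,\alpha)$ into the well around $0$, which is exactly where $(f_2)$ enters. By contrast, parts (ii) and (iii) are softer, resting only on the strict monotonicity of $I$, the identification $\lim_{r\to\infty}I(r,\bar\alpha)=F(\gamma)$, and the ordering $F(\gamma)\le F(\gamma_i)<F(\gamma_{i+1})$ of the barriers.
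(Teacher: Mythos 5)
There is a genuine gap, and it sits exactly where you flagged it. Your dissipation estimate in part (i) uses the unweighted identity \eqref{Ider}, so the energy lost while $u(\cdot,\alpha)$ sweeps a fixed strip such as $[-3\delta/4,-\delta/4]$ is of order $(N-1)\min|F|\,\Delta r/r_{\mathrm{sweep}}$, and you need $r_{\mathrm{sweep}}$ bounded to extract a fixed $\kappa>0$. But in the case $Z_k(\bar\alpha)=\infty$ (a bound state that stays of one sign for all finite $r$, which is the generic situation when $f$ is Lipschitz at $0$ and $\int_0 du/|F(u)|^{1/2}=\infty$), one has $Z_k(\alpha_i)\to\infty$ for $\alpha_i\to\bar\alpha$ in $\mathcal N_k$, so the sweep occurs at arbitrarily large $r$ and your $\kappa$ degenerates like $1/Z_k(\alpha_i)$; meanwhile $\eta(\alpha_i)\to0$ as well, and nothing in your argument compares the two rates. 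The appeal to $(f_2)$ only covers the compactly supported regime. The paper closes exactly this loophole by working with the weighted energy $H(r,\alpha)=r^{2(N-1)}I(r,\alpha)$, whose derivative $2(N-1)r^{2N-3}F(u)$ does not decay as $r\to\infty$ (here $2N-3>0$ for $N\ge2$ is used): $H(Z_k(\alpha_i),\alpha_i)$ stays bounded by $L+2\varepsilon$ with $L=\lim_{r}H(r,\bar\alpha)\ge0$, while the loss of $H$ across the strip is at least $c\,Z_k(\alpha_i)^{2N-3}\cdot\delta/(4K)$, which wins both when $Z_k(\bar\alpha)=\infty$ and, with $L=0$ and a lower bound on $Z_k(\alpha_i)$, when $Z_k(\bar\alpha)<\infty$.

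The same gap reappears, less visibly, in your parts (ii) and (iii). The step ``since $I$ can only decrease once the trajectory leaves that neighbourhood, while $F(u)<F(\gamma)$ just off the level $\gamma$, one obtains $r_\#$ with $I(r,\alpha)<F(\gamma)$'' is a non sequitur: $I\ge F(u)$ together with $I$ decreasing does not force $I$ to descend to the level of $F(u)$; the solution can peel off from $(\gamma,0)$ with $I$ still slightly above $F(\gamma)$, and since the peel-off happens at large $r$ the subsequent dissipation $(N-1)\int|u'|^2/r\,dr$ may be too small to push $I$ below $F(\gamma)$. That $I$ does cross the threshold $F(\gamma)$ before $T_k(\alpha)$ is precisely the content of (ii), and the paper proves it with the same weighted device, $\tilde H(r,\alpha)=r^{2(N-1)}(I(r,\alpha)-F(\gamma))$, run across a fixed strip on which $F<F(\gamma)$ (namely $[-\delta/2,-\delta/4]$ in (ii), and an interval $[a,b]\subset(\gamma_i,\gamma)$ or $(\gamma,\gamma_{i+1})$ in (iii)). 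Your reductions of the conclusions to these energy thresholds (e.g.\ $I(T_k(\alpha),\alpha)\le0$ implies trapping in $(\gamma_{-1},0)\cup(0,\gamma_1)$ and hence $\alpha\in\mathcal Q_{k+1}$; $I(r_1,\alpha)<\min\{F(\gamma_i),F(\gamma_{i+1})\}$ implies $\alpha\notin\mathcal N_k$) are sound; it is the threshold crossing itself that your argument does not establish.
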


\begin{proof}
\noindent Part $(i)$: Let $\bar\alpha\in\mathcal G_k$.  Without loss of generality we may assume that $u(\cdot,\bar\alpha)$ is decreasing in $(T_{k-1}(\bar\alpha),Z_k(\bar\alpha))$. We will show that there exists a neighborhood $V$ such that if $\alpha\in V\cap\mathcal N_k$, then $u(T_k(\alpha),\alpha)>\beta_{-1}$. Arguing by contradiction we assume  that there exists a sequence
$\{\alpha_i\}$,
 $\alpha_i\to\bar\alpha$  with $\alpha_i\in\mathcal N_k$,  such that
\beq\label{n1} u(T_k(\alpha_i),\alpha_i)\le\beta_{-1}
\eeq
so that $u(\cdot,\alpha_i)$ has crossed the value $-\delta$ with positive energy.

Let now $\varepsilon\in(0,1)$. Since
$$\lim_{r\to Z_k(\bar\alpha)}I(r,\bar\alpha)=0\quad\mbox{and}\quad \lim_{r\to Z_k(\bar\alpha)}u(r,\bar\alpha)=0,$$
there exists $r_0>T_{k-1}(\bar\alpha)$ such that
$$I(r_0,\bar\alpha)<\varepsilon,\quad 0<u(r_0,\bar\alpha)<\delta/2,$$
where $\delta$ is as defined in $(f_2)$ of $(A1)$ and $(A2)$, and therefore by continuity, for  $i$ large enough, $0<u(r_0,\alpha_i)<\delta$, $Z_k(\alpha_i)>r_0$, and
$$I(r_0,\alpha_i)<2\varepsilon. $$
Since $I$ is decreasing in $r$, we have that
$$I(r,\alpha_i)<2\varepsilon\quad\mbox{for all}\quad r\in (r_0,T_{k}(\alpha_i)),$$
hence,
\beq\label{n2}
|u'(r,\alpha_i)|\le \sqrt{4-2\min_{s\in[\beta_{*}^-,\beta_*]}F(s)}:=K\quad\mbox{for all }r\in (r_0,T_{k}(\alpha_i))
\eeq
and $i$ large enough. Let us denote by $r(\cdot,\alpha_i)$ the inverse of $u(\cdot,\alpha_i)$ in
$(T_{k-1}(\alpha_i),T_k(\alpha_i))$. From \eqref{n1}, $[-\delta,0]\subset[u(T_k(\alpha_i),\alpha_i),0]$, and from \eqref{n2}, by the mean value theorem we obtain that
\ben\Bigl(\frac{-\delta}{2},\alpha_i\Bigr)-r\Bigl(\frac{-\delta}{4},\alpha_i\Bigr)\ge \frac{\delta}{4K}.
\een
 Let now
$$H(r,\alpha)=r^{2(N-1)}I(r,\alpha).$$
Then
$$H'(r,\alpha)=2(N-1)r^{2N-3}F(u(r,\alpha)),$$
implying that for $\alpha=\bar\alpha$, $H'(r,\bar\alpha)< 0$ for all $r\in(r_0, Z_k(\bar\alpha))$ and
$$H(r,\bar\alpha)\downarrow L\ge 0$$
as $r\to Z_k(\bar\alpha)$. Also, by choosing a larger $r_0$ if necessary, we may assume $H(r_0,\bar\alpha)<L+\varepsilon$. Thus by continuity we have that
$$H(r_0,\alpha_i)\le L+2\varepsilon\quad\mbox{for $i$ large enough.}$$
Also, as $u(r,\alpha_i)<\delta$ for $r\in[r_0,Z_k(\alpha_i)]$, $H$ is decreasing in $[r_0,Z_k(\alpha_i)]$ implying
$$H(Z_k(\alpha_i),\alpha_i)\le L+2\varepsilon\quad\mbox{for $i$ large enough.}$$
Integrating $H'(\cdot,\alpha_i)$ over $(Z_k(\alpha_i),r(\frac{-\delta}{2},\alpha_i))$, we find that
$$
H(r\Bigl(\frac{-\delta}{2},\alpha_i\Bigr),\alpha_i)-H(Z_k(\alpha_i),\alpha_i)=-2(N-1)
\int_{Z_k(\alpha_i)}^{r(\frac{-\delta}{2},\alpha_i)}t^{2N-3}|F(u(t,\alpha_i))|dt$$
and thus, observing that since $N\ge 2$, we have $2N-3>0$ implying
\ben
H(r\Bigl(\frac{-\delta}{2},\alpha_i\Bigr),\alpha_i)&\le& L+2\varepsilon-2(N-1)(Z_k(\alpha_i))^{2N-3}\int_{Z_k(\alpha_i)}^{r(\frac{-\delta}{2},\alpha_i)}|F(u(t,\alpha_i))|dt\\
&\le& L+2\varepsilon-2(N-1)(Z_k(\alpha_i))^{2N-3}\int_{r(\frac{-\delta}{4},\alpha_i)}^{r(\frac{-\delta}{2},\alpha_i)}|F(u(t,\alpha_i))|dt\\
&\le& L+2\varepsilon-2(N-1)(Z_k(\alpha_i))^{2N-3}(r\Bigl(\frac{-\delta}{2},\alpha_i\Bigr)-r\Bigl(\frac{-\delta}{4},\alpha_i\Bigr))C\\
&\le& L+2\varepsilon-2(N-1)(Z_k(\alpha_i))^{2N-3}\frac{\delta}{4K}C,
\een
where $C:=\inf\{|F(s)|,\ s\in[\frac{-\delta}{2},\frac{-\delta}{4}]\}$. If $Z_k(\bar\alpha)=\infty$, by taking $i$ larger if necessary, we conclude that $H(r(\frac{-\delta}{2},\alpha_i),\alpha_i)<0$, a contradiction.
If $Z_k(\bar\alpha)<\infty$, the same conclusion follows by observing that in this case $L=0$ and $Z_k(\alpha_i)$ is bounded below by the positive constant $ r_1/2$, where $ r_1$ the first value of $r>0$ where $u(\cdot,\bar\alpha)$ takes the value $\delta$.
\medskip

\noindent Part $(ii)$: The proof is very similar to that of Part $(i)$, the only difference is that now we consider
\beq\label{tildeH}\tilde H(r,\alpha)=r^{2(N-1)}(I(r,\alpha)-F(\gamma)),
\eeq
so that
\ben\tilde H'(r,\alpha)=2(N-1)r^{2N-3}(F(u(r,\alpha))-F(\gamma)).
\een
We still assume that $u(\cdot,\bar\alpha)$ is decreasing in $(T_{k-1}(\bar\alpha),Z_k(\bar\alpha))$ and that
$\{\alpha_i\}$ contains a subsequence, still denoted the same, such that
\ben F(u(T_k(\alpha_i),\alpha_i))\ge F(\gamma)
\een
 so that $u(\cdot,\alpha_i)$ has crossed the value $-\delta$ with   energy greater than $F(\gamma)$. As above,
$[-\delta,0]\subset[u(T_k(\alpha_i),\alpha_i),0]$, and from  the mean value theorem we obtain that
\ben r\Bigl(\frac{-\delta}{2},\alpha_i\Bigr)-r\Bigl(\frac{-\delta}{4},\alpha_i\Bigr)\ge \frac{\delta}{4K},
\een
where now  $K:= \sqrt{2(F(\gamma)+2-2\min_{s\in[\beta_{*}^-,\beta_*]}F(s))}$. Setting $C_0:=\inf\{|F(s)-F(\gamma)|,\ s\in[\frac{-\delta}{2},\frac{-\delta}{4}]\}$ and
$0\le L:=\lim\limits_{r\to Z_k(\bar\alpha)}\tilde H(r,\bar\alpha)$,
we obtain
$$\tilde H(r\Bigl(\frac{-\delta}{2},\alpha_i\Bigr),\alpha_i)\le L+2\varepsilon-2(N-1)(Z_k(\alpha_i))^{2N-3}\frac{\delta}{4K}C_0.$$
The same reasoning as above leads to the conclusion that for $i$ sufficiently large
$$I(r\Bigl(\frac{-\delta}{2},\alpha_i\Bigr),\alpha_i)<F(\gamma),$$
a contradiction to the fact that $I$ is decreasing.

\noindent{ Part $(iii)$:} If $\gamma_i<\gamma<\gamma_{i+1}$,  and since $F(\gamma)\le \min\{F(\gamma_i),F(\gamma_{i+1})\}$, we can repeat the same argument as above but replacing the interval $[-\delta/2,-\delta/4]$ by an interval $[a,b]\subset(\gamma_i,\gamma)$ if $i\ge0$ and $[a,b]\subset(\gamma,\gamma_{i+1})$ if $i\le-1$, where $F(s)<F(\gamma)$.
\end{proof}

Our next result is a generalization of Lemma 3.1 in \cite{gst}.

\begin{lemma}\label{gamma-vec}
\mbox{ }

\begin{enumerate}
\item[(i)]
Let $f$ satisfy $(A1)$ or $(A2)$, and let $\bar\alpha$ such that $u(Z_j(\bar\alpha),\bar\alpha)=\gamma_i$ for some $i\neq 0, \bar M -1$, and let $k\ge j$. Then there exists a neighborhood $V_k$ of $\bar\alpha$ such that if $\alpha\in V_k$ and $u(Z_j(\alpha),\alpha)\not=\gamma_i$, then $\alpha\in \mathcal N_k$.
\item[(ii)]
Let $f$ satisfy $(A2)$,   $\bar\beta$ be defined as in Definition \ref{constants}(iii),  $\bar u\in(\gamma_*^-,0)$ such that $F(\bar u)=F(2\bar\beta)$ and set $-\tilde F:=\min_{s\in[\beta_*^-,\beta_*]}F(s)$.  If $\alpha>2\bar\beta$, with $\alpha\in \mathcal N_j$ for $j\le k$ and
$$\bar r(\alpha)\ge C_k:=\frac{(k+1)(2\bar\beta-\bar u)(N-1)\sqrt{2(F(2\bar\beta)+\tilde F)}}{F(2\bar\beta)-F(\bar\beta)},$$
where $\bar r(\alpha)$ denotes the first point after $T_{j-1}(\alpha)$ for which $F(u(\bar r(\alpha),\alpha))=F(2\bar\beta)$,  then $\alpha\in\mathcal N_k$.
\end{enumerate}
\end{lemma}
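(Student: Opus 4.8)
The plan is to prove both parts by a continuity-and-energy argument, exploiting the monotonicity of the energy functional $I(r,\alpha)$ along solutions and the fact that $F$ has a strict local maximum at each $\gamma_i$. For part $(i)$, since $u(Z_j(\bar\alpha),\bar\alpha)=\gamma_i$ with $i\neq 0,\bar M-1$, the remark preceding this lemma forces $Z_j(\bar\alpha)=\infty$, so $u(\cdot,\bar\alpha)$ is monotone past $T_{j-1}(\bar\alpha)$ and $I(r,\bar\alpha)\downarrow F(\gamma_i)$ as $r\to\infty$, with $F(\gamma_i)>F(\gamma_{i-1})$ and (by maximality in the construction of the $\gamma$'s) $F(\gamma_i)\ge F(\gamma_{i+1})$; in fact $F(\gamma_i)$ strictly exceeds the value of $F$ at the relevant neighbouring maxima it was chosen to dominate. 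I would first fix $r_0$ large so that $I(r_0,\bar\alpha)$ is within $\varepsilon$ of $F(\gamma_i)$ and $u(r_0,\bar\alpha)$ is within $\varepsilon$ of $\gamma_i$; by continuous dependence, for $\alpha$ near $\bar\alpha$ we get $I(r_0,\alpha)<F(\gamma_i)+2\varepsilon$ and $u(r_0,\alpha)$ close to $\gamma_i$. The key point is then that once a solution $u(\cdot,\alpha)$ leaves the level $\gamma_i$ at the time $Z_j(\alpha)$ (i.e. $u(Z_j(\alpha),\alpha)\neq\gamma_i$), its energy at that time is already $\le F(\gamma_i)+2\varepsilon$, which for small $\varepsilon$ is strictly less than $\min\{F(\gamma_{i-1}),F(\gamma_{i+1})\}$ (using $(f_3)$-type strict inequalities and the definition of the $\gamma$'s); hence, combined with Lemma \ref{p2m}(ii)--(iii) applied iteratively at the steps $j+1,\dots,k$, the solution can never reach any subsequent local maximum level of $F$ with the right sign, so it must keep oscillating and cross zero $k$ times, i.e. $\alpha\in\mathcal N_k$. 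The induction is set up so that at each stage the trapped energy only decreases, and Lemma \ref{p2m}(ii) guarantees that after each zero the energy drops strictly below the current maximum level, keeping the solution inside the corral between consecutive $\gamma$'s.

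For part $(ii)$ the mechanism is the quantitative estimate already used in the proof of Lemma \ref{p2m}(i): work with $H(r,\alpha)=r^{2(N-1)}I(r,\alpha)$, whose derivative is $2(N-1)r^{2N-3}F(u(r,\alpha))$. If $\alpha>2\bar\beta$ and $\alpha\in\mathcal N_j$ with $j\le k$, consider the first radius $\bar r(\alpha)>T_{j-1}(\alpha)$ at which $F(u(\bar r(\alpha),\alpha))=F(2\bar\beta)$; on $(0,\bar r(\alpha))$ one has $u>2\bar\beta$ so $F(u)>F(\bar\beta)>0$ by the choice of $\bar\beta$ (here $(f_5)$ enters, ensuring $Q>0$ and $F$ increasing past $\bar\beta$), hence $H$ is strictly increasing there. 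One then estimates, as in Lemma \ref{p2m}(i), how much $H$ must increase while $u$ descends through each interval of length comparable to $2\bar\beta-\bar u$: using $|u'|\le\sqrt{2(I+\tilde F)}\le\sqrt{2(F(2\bar\beta)+\tilde F)}$ on the relevant set and $F(u)-F(\bar\beta)\ge F(2\bar\beta)-F(\bar\beta)$ while $u\in[\bar\beta,2\bar\beta]$, the mean value theorem gives a lower bound on the time spent, hence a lower bound on the gain of $H$. The hypothesis $\bar r(\alpha)\ge C_k$ is exactly the threshold making this accumulated gain large enough that the energy $I$ at level $u=2\bar\beta$ still exceeds $F(2\bar\beta)+\tilde F \ge F(s)$ for every $s$ the solution could subsequently reach — more precisely, it forces $I$ to stay above every $F(\gamma_i)$, so the solution cannot converge to any equilibrium and must complete its remaining $k-j$ zeros, i.e. $\alpha\in\mathcal N_k$. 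The factor $(k+1)$ in $C_k$ accounts for re-running this estimate across all of the at most $k+1$ descents.

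I expect the main obstacle to be the bookkeeping in part $(i)$: one must carefully track, through the inductive application of Lemma \ref{p2m}, that the energy at the moment the perturbed solution detaches from the level $\gamma_i$ is below \emph{all} the subsequent critical levels $F(\gamma_{i\pm 1}),F(\gamma_{i\pm 2}),\dots$ that matter, and that "detaching from $\gamma_i$" with small energy surplus indeed propagates to "crosses zero at step $k$" rather than, say, getting stuck at an intermediate maximum or converging monotonically. This requires invoking Lemma \ref{p2m}(ii) to get a \emph{strict} energy drop after each zero and Lemma \ref{p2m}(iii) to handle the borderline case where the perturbed solution still hits $\gamma_i$; the shrinking of the neighbourhood $V_k$ with $k$ is unavoidable and must be made explicit. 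Part $(ii)$ is more mechanical once one imports verbatim the $H$-function computation from Lemma \ref{p2m}(i), the only care needed being the choice of the descent intervals and confirming that the constant $C_k$ as written produces a contradiction with $I$ decreasing below $F(2\bar\beta)$ when $\alpha\notin\mathcal N_k$.
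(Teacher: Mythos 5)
There is a genuine gap in your part $(i)$, and it is the heart of the lemma. The paper's mechanism is a \emph{lower} bound on the energy: set $B_i=\max\{F(\gamma_\ell): F(\gamma_\ell)<F(\gamma_i)\}$ and $\varepsilon=(F(\gamma_i)-B_i)/(k+1)$, use continuity only to guarantee that the perturbed solution still has energy at least $F(\gamma_i)-\varepsilon$ at some radius $\ge D_1\sim \varepsilon^{-1}$, and then integrate $I'=-(N-1)|u'|^2/r$ over each monotone leg, together with $|u'|\le\sqrt{2(I+\tilde F)}$ and the bounded range of $u$, to show that each leg costs at most $\varepsilon$ of energy. After $k+1$ legs the energy is still above $B_i$, and since $B_i$ dominates every smaller critical level of $F$, each critical point of $u$ is forced to occur past the next zero; this is what yields $\alpha\in\mathcal N_k$. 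Your argument runs in the opposite direction: you derive an \emph{upper} bound $I\le F(\gamma_i)+2\varepsilon$ and claim it is below $\min\{F(\gamma_{i-1}),F(\gamma_{i+1})\}$. That inequality is false — by Definition 1.1(i) the chain satisfies $F(\gamma_{i-1})<F(\gamma_i)<F(\gamma_{i+1})$, so the minimum is $F(\gamma_{i-1})<F(\gamma_i)$. More importantly, even a correct upper bound on $I$ cannot force oscillation: nothing in your argument prevents the solution from turning around at a positive value of $u$ (so $\alpha\in\mathcal P_j$, e.g.\ $\mathcal Q_j$ or $\mathcal S_j$) or from converging to $0$; excluding these outcomes requires exactly the quantitative lower bound on $I$ that you never establish. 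Lemma 2.2(ii)--(iii) provide upper bounds on the energy after a near-tangency and cannot substitute for it.

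Part $(ii)$ is closer in spirit — you correctly identify the role of $|u'|\le\sqrt{2(F(2\bar\beta)+\tilde F)}$, the $(k+1)$ descents, and $C_k$ as the threshold radius — but the paper does not use $H(r)=r^{2(N-1)}I(r)$ here at all. It repeats the part-$(i)$ scheme verbatim: with $\varepsilon=(F(2\bar\beta)-F(\bar\beta))/(k+1)$, integrating $I'=-(N-1)|u'|^2/r$ from $\bar r(\alpha)\ge C_k$ to the next critical point and bounding the traversed $u$-interval by $2\bar\beta-\bar u$ gives $F(u(r_0,\alpha))>F(2\bar\beta)-\varepsilon>F(\bar\beta)$; since $F(\bar\beta)$ exceeds $F(\beta_*)=F(\gamma_M)$ and $F(\beta_*^-)=F(\gamma_{\bar M})$, the turning point must lie past zero, and the estimate can be rerun $k$ times. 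Your phrase ``so the solution cannot converge to any equilibrium and must complete its remaining zeros'' again elides the actual step — what must be shown is that every critical point of $u$ occurs at a level where $F$ exceeds all competing local maxima of $F$, which is precisely what the per-leg energy-loss bound delivers.
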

\begin{proof}
\mbox{ }
\noindent Part (i): Without loss of generality we may assume that $u(Z_j(\bar\alpha),\bar\alpha)=\gamma_i>0$.
Let
$$B_i=\max\{F(\gamma_\ell)\ |\ F(\gamma_\ell)<F(\gamma_i)\}$$
and $u_i$ be the largest point in $(\gamma_*^-,0)$ such that $F(\gamma_i)=F(u_i)$.
Set
$$\varepsilon:=\frac{F(\gamma_i)-B_i}{k+1}.$$
Let $D_1,\ D_2$ be such that
$$D_1:=\frac{(\gamma_i-u_i)(N-1)\sqrt{2(F(\gamma_i)+\tilde F)}}{\varepsilon},\quad F(u(D_2,\bar\alpha))>F(\gamma_i)-\frac{\varepsilon}{2},$$
and set $D:=\max\{D_1,D_2\}$. By  the continuous dependence of the solutions on the initial data and Lemma \ref{p2m}(ii), there exists a neighborhood $V$ of $\bar\alpha$ such that for $\alpha\in V$,
$$\sup_{r\in[0,D]}|F(u(r,\alpha))-F(u(r,\bar\alpha))|<\varepsilon/2,$$
and if $\alpha\in \mathcal N_j$, $F(u(T_j(\alpha),\alpha))<F(\gamma_i)$.  Let now $\alpha\in V$ and assume that $u(Z_j(\alpha),\alpha)\not=\gamma_i$, and denote by $\bar r_{\varepsilon}$ the first point after $D$ such that $F(u(\bar r_{\varepsilon},\alpha))=F(\gamma_i)-\varepsilon$. Denote by $r_0:=r_0(\alpha)$  the first point after $\bar r_{\varepsilon}$ where $u'(r_0,\alpha)=0$.
By integrating \eqref{Ider} over $(\bar r_{\varepsilon}, r_0)$ we find that
$$I(\bar r_{\varepsilon},\alpha)-F(u(r_0,\alpha))=(N-1)\int_{\bar r_{\varepsilon}}^{r_0}\frac{|u'(r,\alpha)|^2}{r}dr,$$
hence, using that
$$|u'(r,\alpha)|\le\sqrt{2(I(\bar r_{\varepsilon})+\tilde F)}\quad\mbox{for all }r>\bar r_{\varepsilon}$$
we obtain
$$F(u(r_0,\alpha))\ge I(\bar r_{\varepsilon})\Bigl(1-\frac{\sqrt{2(I(\bar r_{\varepsilon})+\tilde F)}}{I(\bar r_{\varepsilon})}\frac{(\gamma_i-u_i)(N-1)}{\bar r_{\varepsilon}}\Bigr).$$
Therefore, as
$\sqrt{2(I+\tilde F)}/I$ is decreasing in $I$, $I(\bar r_{\varepsilon})\ge F(\gamma_i)-\varepsilon$,  $\bar r_{\varepsilon}>D$ and $\varepsilon<F(\gamma_i)/(k+1)$, we have that
\ben
F(u(r_0,\alpha))&\ge& I(\bar r_{\varepsilon})\Bigl(1-\frac{\sqrt{2(F(\gamma_i)-\varepsilon+\tilde F)}}{F(\gamma_i)-\varepsilon}\frac{\varepsilon}{\sqrt{2(F(\gamma_i)+\tilde F)}}\Bigr)\\
&\ge & F(\gamma_i)-2\varepsilon.
\een
Hence,
$$F(\beta_i)\le B_i<F(\gamma_i)-2\varepsilon<F(u(r_0,\alpha))<F(\gamma_i).$$
Since $f(s)>0$ for $s\in(\beta_i,\gamma_i)$, we deduce that  $r_0=T_j(\alpha)$.
Iterating this process at $\bar r_{2\varepsilon}$, the first point after $T_j(\alpha)$ at which  $F(u(\bar r_{2\varepsilon},\alpha))=F(\gamma_i)-2\varepsilon$,  we obtain $\alpha\in\mathcal N_2$. We repeat this procedure $k$ times to obtain $\alpha\in\mathcal N_k$.

\noindent Part (ii): Without loss of generality we may assume that $u(\bar r(\alpha),\alpha)>0$. Let
$$\varepsilon:=\frac{F(2\bar\beta)-F(\bar\beta)}{k+1},$$
and again denote by  $r_0:=r_0(\alpha)$  the first point after $\bar r(\alpha)$ where $u'(r_0,\alpha)=0$.
By integrating \eqref{Ider} over $(\bar r(\alpha), r_0)$ as in Part $(i)$ we obtain
$$F(\bar\beta)<F(2\bar\beta)-\varepsilon<F(u(r_0,\alpha)),$$
and therefore $r_0=T_j(\alpha)$.
Iterating this process at $\bar r_{\varepsilon}$, the first point after $T_j(\alpha)$ at which  $F(u(\bar r_{\varepsilon},\alpha))=F(\gamma_i)-\varepsilon$,  we obtain $\alpha\in\mathcal N_2$. We repeat this procedure $k$ times to obtain $\alpha\in\mathcal N_k$.
\end{proof}

\begin{lemma}
\label{front}
\mbox{ }
\begin{enumerate}
\item[(i)]
The sets ${\mathcal N_k}$, ${\mathcal Q_k}$ and ${\mathcal S_k}$ are open in $[\beta_*,\gamma_*)$.
\item[(ii)]
The boundary of $\mathcal G_k\cup\mathcal Q_k$ is contained in $\bigcup_{i=1}^k\mathcal G_i$.
\end{enumerate}
\end{lemma}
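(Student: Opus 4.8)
The plan is to exploit continuous dependence on initial data together with the structural Lemmas \ref{p2m} and \ref{gamma-vec} to control what happens as $\alpha$ ranges over $[\beta_*,\gamma_*)$. For part $(i)$, I would argue by induction on $k$. The base case $\mathcal N_1,\mathcal Q_1,\mathcal S_1$: if $\bar\alpha\in\mathcal N_1$, then $u(Z_1(\bar\alpha),\bar\alpha)=0$ with $u'(Z_1(\bar\alpha),\bar\alpha)<0$, a strict transversal crossing, so for a compact $r$-interval containing $Z_1(\bar\alpha)$ the solution $u(\cdot,\alpha)$ still crosses zero transversally with negative slope for $\alpha$ near $\bar\alpha$; hence $Z_1$ is continuous at $\bar\alpha$ and a neighborhood stays in $\mathcal N_1$. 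For $\mathcal Q_1$ and $\mathcal S_1$: if $\alpha\in\mathcal P_1$ with $\gamma_i<u(Z_1(\bar\alpha),\bar\alpha)<\gamma_{i+1}$, then by the remark preceding the lemma, $F(u(Z_1(\bar\alpha),\bar\alpha))<\min\{F(\gamma_i),F(\gamma_{i+1})\}$ and the solution is trapped strictly between $\gamma_i$ and $\gamma_{i+1}$ forever; the value $u(Z_1(\bar\alpha),\bar\alpha)$ and its energy are stable under perturbation on a finite $r$-interval, and since the energy stays below the barrier, the perturbed solution is likewise trapped — so a whole neighborhood lies in $\mathcal Q_1$ (resp.\ $\mathcal S_1$). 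The inductive step is identical once one knows $\mathcal N_{k-1}$ is open: on $\mathcal N_{k-1}$ the quantities $T_{k-1}(\alpha)$, $Z_k(\alpha)$ depend continuously on $\alpha$ at points where the relevant crossings are transversal (which is exactly the defining condition of $\mathcal N_k$), and the trapping argument for $\mathcal Q_k,\mathcal S_k$ is verbatim the base case applied after time $Z_k$.

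For part $(ii)$, let $\bar\alpha\in\partial(\mathcal G_k\cup\mathcal Q_k)$. The goal is to show $\bar\alpha\in\mathcal G_i$ for some $i\le k$. First I would observe that $\bar\alpha$ cannot lie in any of the open sets $\mathcal N_j$ ($j\le k$), $\mathcal Q_j$ ($j\le k$), or $\mathcal S_j$ ($j\le k$), since those are open and disjoint from $\mathcal G_k\cup\mathcal Q_k$ in the appropriate range, yet $\bar\alpha$ is approached by points of $\mathcal G_k\cup\mathcal Q_k$; more precisely I want to show that if $\bar\alpha\notin\bigcup_{i=1}^k\mathcal G_i$ then $\bar\alpha$ has a neighborhood disjoint from $\mathcal G_k\cup\mathcal Q_k$, contradicting $\bar\alpha\in\partial(\mathcal G_k\cup\mathcal Q_k)$. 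So suppose $\bar\alpha$ reaches no double zero among its first $k$ (would-be) sign changes. Then for each $j\le k$ one of the following holds for $u(\cdot,\bar\alpha)$: it makes a clean transversal $j$-th crossing (i.e.\ $\bar\alpha\in\mathcal N_j$), or it gets trapped in a band $(\gamma_i,\gamma_{i+1})$ with $i\ne 0,-1$ (so $\bar\alpha\in\mathcal S_j$), or it limits onto a $\gamma_i$ (so $\bar\alpha\in\Upsilon_j$, which forces $Z_j=\infty$), or it ends in $\mathcal Q_j$ for some $j<k$ — but if $\bar\alpha\in\mathcal Q_j$ with $j<k$ then by part $(i)$ a neighborhood is in $\mathcal Q_j$, disjoint from $\mathcal Q_k\cup\mathcal G_k$, done. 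The genuinely delicate case is $\bar\alpha\in\Upsilon_j$, i.e.\ $u(Z_j(\bar\alpha),\bar\alpha)=\gamma_i$: here Lemma \ref{p2m}$(iii)$ is exactly what is needed — it gives a neighborhood $V$ of $\bar\alpha$ in which every $\alpha$ is either again in $\Upsilon_j$ or has its energy driven strictly below $\min\{F(\gamma_i),F(\gamma_{i+1})\}$ while trapped in $(\gamma_i,\gamma_{i+1})$, hence lies in $\mathcal S_j$; either way $V$ misses $\mathcal Q_k\cup\mathcal G_k$. And if $\bar\alpha\in\mathcal N_j$ for all $j\le k$ with $\bar\alpha\in\mathcal N_k$, then $\mathcal N_k$ being open gives a neighborhood in $\mathcal N_k$, disjoint from $\mathcal G_k$; it could still meet $\mathcal Q_k$? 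No — $\mathcal Q_k\subset\mathcal P_k=\mathcal N_{k-1}\setminus(\mathcal N_k\cup\mathcal G_k\cup\cdots)$, so $\mathcal N_k$ and $\mathcal Q_k$ are disjoint. The remaining possibility, $\bar\alpha\in\mathcal N_{k-1}\setminus\mathcal F_k$ with $u(Z_k(\bar\alpha),\bar\alpha)=0$ but $\bar\alpha\notin\mathcal N_k\cup\mathcal G_k$, is impossible since $u'(Z_k(\bar\alpha),\bar\alpha)$ is then either nonzero (giving $\mathcal N_k$) or zero (giving $\mathcal G_k$).

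The main obstacle I anticipate is the bookkeeping around $\Upsilon_k$ and the points where $u(Z_j,\cdot)$ equals a local maximum $\gamma_i$ of $F$: there the crossing structure is not transversal, $Z_j$ need not be continuous, and one genuinely needs the barrier/energy-decay mechanism of Lemma \ref{p2m}$(ii)$--$(iii)$ and the "stays-in-$\mathcal N_k$" persistence of Lemma \ref{gamma-vec}$(i)$ to see that nearby solutions fall into an already-open stratum rather than accumulating new double zeros. A secondary technical point is handling $\mathcal F_k$ (where $T_{k-1}=Z_k=\infty$): one must check that $\mathcal F_k\subset\mathcal N_{k-1}$ and that membership in $\mathcal F_k$ does not produce boundary points of $\mathcal G_k\cup\mathcal Q_k$ outside $\bigcup_i\mathcal G_i$, which follows because on $\mathcal F_k$ the solution is eventually monotone and its limit is a zero of $f$ with energy $\le F(\gamma_M)$ or in a band, again landing in a closed-under-perturbation regime handled above. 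I would also record explicitly, once and for all, the disjointness of the strata $\mathcal N_k,\mathcal G_k,\mathcal Q_k,\mathcal S_k,\Upsilon_k$ inside $\mathcal N_{k-1}$, since part $(ii)$'s argument is essentially: the boundary of one open stratum can only be reached from the non-open strata, and among those only $\mathcal G_i$ survives the neighborhood arguments.
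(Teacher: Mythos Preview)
Your overall strategy matches the paper's: establish openness via continuous dependence plus energy trapping, then for part~(ii) argue that a boundary point of $\mathcal G_k\cup\mathcal Q_k$ must land in $\bigcup_{i\le k}(\mathcal G_i\cup\Upsilon_i)$ and finally rule out $\Upsilon_i$ by a neighborhood argument. Part~(i) is essentially the paper's argument; the only thing you gloss over is the edge case where $Z_k(\bar\alpha)=\infty$ and $u(\cdot,\bar\alpha)$ limits onto a local maximum of $F$ lying strictly between two consecutive $\gamma_i$'s, which the paper treats separately via Lemma~\ref{p2m}(iii).

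There is, however, a genuine mix-up in your handling of $\bar\alpha\in\Upsilon_j$ in part~(ii). You invoke Lemma~\ref{p2m}(iii), but its hypothesis is $u(Z_j(\bar\alpha))=\gamma$ with $\gamma_i<\gamma<\gamma_{i+1}$ \emph{strictly}; it does not apply when $u(Z_j(\bar\alpha))=\gamma_i$, which is exactly what membership in $\Upsilon_j$ means. Moreover, the conclusion you state --- that perturbed solutions are trapped in $(\gamma_i,\gamma_{i+1})$ and hence lie in $\mathcal S_j$ --- is not what actually happens. Because $F(\gamma_i)$ sits \emph{above} the neighboring barriers (in particular $F(\gamma_i)>\max\{F(\gamma_\ell):F(\gamma_\ell)<F(\gamma_i)\}$), a slight perturbation leaves the solution with energy just below $F(\gamma_i)$, still high enough to force arbitrarily many sign changes before settling. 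The correct tool is Lemma~\ref{gamma-vec}(i), which you mention in your closing paragraph but do not actually deploy: it yields a neighborhood $V$ of $\bar\alpha$ with $V\subset\Upsilon_j\cup\mathcal N_k$, and since $\mathcal N_k$ is open and disjoint from $\mathcal G_k\cup\mathcal Q_k$, one gets $V\cap(\mathcal G_k\cup\mathcal Q_k)=\emptyset$ as required. So your final conclusion is right, but the lemma invoked and the described mechanism (trapping versus oscillation) are wrong; swapping in Lemma~\ref{gamma-vec}(i) fixes the argument and brings it in line with the paper's proof.
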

\begin{proof}
\mbox{ }\\
\noindent{Part (i):} The proof that $\mathcal N_k$ is open follows by continuous dependence of solutions in the initial value $\alpha$, see \cite[Proposition 2.4]{cghy4}.

 Let now $k\ge 1$ and let $\bar\alpha\in\mathcal Q_k$. Without loss of generality we may assume  $0<u(Z_k(\bar\alpha),\bar\alpha)<\gamma_1$. If $I(Z_k(\bar\alpha),\bar\alpha)<0$, then there exists $r_1>0$ such that $I(r_1,\bar\alpha)<0$ and $0<u(r_1,\bar\alpha)<\gamma_1$. By continuous dependence of solutions in the initial data, there exists $\delta>0$ such that for any $\alpha\in(\bar\alpha-\delta,\bar\alpha+\delta)$, then $I(r_1,\alpha)<0$ and $0<u(r_1,\alpha)<\gamma_1$. Moreover, by taking a smaller $\delta$ if necessary, we have that $u(\cdot,\alpha)$ has exactly $k-1$ zeros in $[0,r_1]$, hence $(\bar\alpha-\delta,\bar\alpha+\delta)\subset\mathcal Q_k$.\\
 If $I(Z_k(\bar\alpha),\bar\alpha)=0$, then $u(Z_k(\bar\alpha),\bar\alpha)$ is a local maximum of $F$ and the result follows from Lemma \ref{p2m} (iii).\\
 The same argument shows that $\mathcal S_k$ is open.

\noindent{Part (ii):}
As $\mathcal N_k$ is open, we have that $\mathcal N_k\cap\overline{\mathcal Q_k\cup\mathcal G_k}=\emptyset$.

Let $\bar\alpha$ belong to the boundary of $\mathcal Q_k\cup\mathcal G_k$. As $\mathcal Q_i$ and $\mathcal S_i$ are open, we must have that    $\bar\alpha\in\bigcup_{i=1}^k\mathcal G_i\cup\Upsilon_i$. But from Lemma \ref{gamma-vec}, if $\bar\alpha\in\bigcup_{i=1}^k\Upsilon_i$, then there exists $\delta>0$ such that $V_{\delta}(\bar\alpha)\subset \Upsilon_j\cup\mathcal N_k$, implying that
$(\mathcal Q_k\cup\mathcal G_k)\cap V_{\delta}(\bar\alpha)=\emptyset$, a contradiction. Hence
$\bar\alpha\in \bigcup_{i=1}^k\mathcal G_i$.
\end{proof}

\section{Proof of the main results }\label{exist}

In this section we prove our theorems. To this end, we need the following key result, which is a generalization of Lemma 3.1 in \cite{cghy4}.

\begin{lemma}\label{nn2}
Assume that $f$ satisfies $(A1)$ or $(A2)$. Then,  for each $k\in\mathbb N$, there exists  $\alpha_k\in(\beta_*,\gamma_*)$ such that $[\alpha_k,\gamma_*)\subset{\mathcal N}_k$.

\end{lemma}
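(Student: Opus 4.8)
The plan is to show that for initial values $\alpha$ sufficiently close to $\gamma_*$ (or sufficiently large, in the infinite case), the solution $u(\cdot,\alpha)$ of \eqref{ivp} descends through the required $k-1$ oscillations while keeping enough energy to cross zero transversally each time, so that $\alpha\in\mathcal N_k$. The mechanism is the following: if $\alpha$ is near $\gamma_*$, then $I(0,\alpha)=F(\alpha)$ is near $F(\gamma_*)$ (or, in case (A2), $F(\alpha)$ is large), which is strictly larger than $F(\beta_*)=F(\gamma_M)$ and indeed larger than $F(\gamma_i)$ for every relevant local maximum $\gamma_i$. Since the energy $I(r,\alpha)$ is nonincreasing in $r$ by \eqref{Ider}, the only way the solution can fail to be in $\mathcal N_k$ is if the energy drops below $F(\gamma_i)$ for some intermediate local maximum $\gamma_i$ before the $(k-1)$-st zero is reached, i.e. if the solution "gets stuck" near one of the plateaus of $F$. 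So the heart of the matter is to bound, uniformly for $\alpha$ near $\gamma_*$, the energy loss accumulated during the first $k$ descents.

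First I would set up the picture near $\gamma_*$: pick $\bar\beta$ as in Definition \ref{constants}(iii), so $F(\bar\beta)>F(\beta_*^-)$ and, in case (A2), $Q(s)>0$ whenever $F(s)>F(\bar\beta)$; this last fact is what controls the behavior for large $u$ via $(f_5)$. Then I would choose $\alpha$ large enough (resp. close enough to $\gamma_*$) that $F(\alpha)>F(2\bar\beta)$, say, and invoke Lemma \ref{gamma-vec}(ii): it gives a constant $C_k$ such that if the first point $\bar r(\alpha)$ at which the energy level $F(2\bar\beta)$ is crossed satisfies $\bar r(\alpha)\ge C_k$, then $\alpha\in\mathcal N_k$. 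Thus the task reduces to showing $\bar r(\alpha)\to\infty$ as $\alpha\to\gamma_*$ (resp. $\alpha\to\infty$). This is plausible because, while $|u|$ stays in the region where $F(u)\ge F(2\bar\beta)$ and $|u'|$ is correspondingly bounded, the solution cannot traverse the (bounded, in case (A1)) $u$-interval in bounded time once $u(0)=\alpha$ is pushed out toward $\gamma_*$ — more precisely, the time to descend from $\alpha$ to the level set $\{F=F(2\bar\beta)\}$ is at least the $u$-distance divided by $\sup|u'|$, and the latter is controlled by $\sqrt{2(I(0,\alpha)+\tilde F)}=\sqrt{2(F(\alpha)+\tilde F)}$; in case (A1) with $\gamma_*<\infty$ this argument needs the additional input that $u$ must pass near the local maxima of $F$ where $f$ changes sign, which slows it down, and here is where Lemma \ref{gamma-vec}(i) and Lemma \ref{p2m}(ii) are used to route the solution correctly past each $\gamma_i$.

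The cleanest route is probably an induction on $k$ together with a compactness/continuous-dependence argument: assuming $[\alpha_{k-1},\gamma_*)\subset\mathcal N_{k-1}$, one examines solutions with $\alpha\in[\alpha_{k-1},\gamma_*)$ after their $(k-1)$-st zero $Z_{k-1}(\alpha)$; by Proposition \ref{basic} and the monotonicity of the energy, each such solution either is in $\mathcal F_k$ (hence trivially stays on one side forever, which with the finiteness of sign changes from Proposition \ref{basic}(iv) one excludes for $\alpha$ near $\gamma_*$ by an energy-level argument) or reaches a first interior extremum; Lemma \ref{gamma-vec}(i) then shows that once the solution clears all the intermediate local maxima $\gamma_i$ with the energy still above their $F$-values, it must in fact cross zero, giving $\alpha\in\mathcal N_k$. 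The point is that for $\alpha$ close to $\gamma_*$ the initial energy $F(\alpha)$ exceeds $F(\gamma_i)$ for every $i$ with $|i|\le$ (number of maxima), uniformly, and Lemma \ref{gamma-vec}(i) converts "energy above $F(\gamma_i)$ near the $\gamma_i$ plateau" into "$\alpha\in\mathcal N_k$."

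I expect the main obstacle to be the infinite case (A2): there $\gamma_*=\infty$, so "near $\gamma_*$" means $\alpha\to\infty$, the $u$-interval is unbounded, $F(\alpha)$ may tend to a finite $F_\infty$ or to $+\infty$, and one cannot simply say the solution is slow because it has a long way to go. This is exactly what $(f_5)$ and the quantity $Q(s)=2NF(s)-(N-2)sf(s)$ are designed to handle: via the Pohozaev-type identity $\frac{d}{dr}\big(r^N[\tfrac12|u'|^2 + F(u)] - \tfrac{N-2}{2}r^{N-1}uu'\big)$-style computation, positivity of $Q$ on the set $\{F>F(\bar\beta)\}$ forces $\bar r(\alpha)$ to grow, and the limit condition in $(f_5)$ quantifies the rate. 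So the technical core of the proof — and the step I would budget the most care for — is the estimate showing $\bar r(\alpha)\ge C_k$ for $\alpha$ large in case (A2), combining the energy identity \eqref{Ider}, the auxiliary function $H(r,\alpha)=r^{2(N-1)}I(r,\alpha)$ (or the Pohozaev variant), and $(f_5)$; the finite case (A1) is then the easier special case where boundedness of the domain of $f$ does the work for free.
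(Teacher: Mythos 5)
There is a genuine gap in case $(A2)$, and it sits exactly where you say you would ``budget the most care'': your reduction of the whole problem to showing $\bar r(\alpha)\ge C_k$ for $\alpha$ large (equivalently, $\bar r(\alpha)\to\infty$ as $\alpha\to\infty$) is false in general. For a nonlinearity behaving like $s^p$ at infinity with $1<p<(N+2)/(N-2)$ (which satisfies $(f_5)$), the paper's own estimate $r_\theta\ge (c\alpha/f(s_1))^{1/2}$ scales like $\alpha^{(1-p)/2}$, and by the natural rescaling the first radius at which the solution exits the region $\{F(u)>F(2\bar\beta)\}$ actually tends to $0$ as $\alpha\to\infty$: the solution traverses that region ever faster, not slower. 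Condition $(f_5)$ and the quantity $Q$ are not there to force $\bar r(\alpha)$ to grow; they force the Pohozaev functional $E(r,\alpha)=2r^NI(r,\alpha)+(N-2)r^{N-1}u'u$ to satisfy $E(\bar r(\alpha),\alpha)\ge \frac1N Q(s_2)\bigl(c\alpha/f(s_1)\bigr)^{N/2}\to\infty$, which is a statement about energy, not about time. The paper's proof is therefore a dichotomy that your proposal collapses into one branch: either $\bar r(\alpha)\ge C_k$, in which case Lemma \ref{gamma-vec}(ii) applies directly, or $\bar r(\alpha)<C_k$, in which case the divergence of $E$ at a \emph{bounded} radius yields a pointwise lower bound $|u'|\ge 4\bar\beta-2\bar u>0$ along the descent, forcing the solution to cross zero and reach the level $\bar u$ before $r=C_k+1$ with $E$ essentially undiminished, after which the step is iterated $k$ times. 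Without this second branch the argument does not close, and no amount of care will make $\bar r(\alpha)\ge C_k$ hold for all large $\alpha$ under $(f_5)$ alone.

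Two smaller points. For $(A1)$ the paper does not go through $\bar r(\alpha)$ at all: it applies the staircase Lemma \ref{gamma-vec}(i) once, at $\bar\alpha=\gamma_*$ itself (using $f(\gamma_*)=0$ and the fact that $F(\gamma_*)$ dominates every $F(\gamma_i)$, so the energy can afford to lose at most $(F(\gamma_*)-B)/(k+1)$ per leg of the oscillation); your appeal to Lemma \ref{gamma-vec}(ii) here is not available, since that part is stated only under $(A2)$ and involves the point $2\bar\beta$, which need not lie in the domain of $f$ when $\gamma_*<\infty$. Your underlying intuition for $(A1)$ --- the solution lingers because it starts near the equilibrium $\gamma_*$ --- is the right one and is what Lemma \ref{gamma-vec}(i) encodes. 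Finally, the induction on $k$ via continuous dependence after $Z_{k-1}(\alpha)$ would face the same obstruction as the main gap: continuous dependence gives no uniform control as $\alpha\to\gamma_*$ over the unbounded time needed to track $k$ oscillations, which is precisely why the paper works with the quantitative energy-loss-per-oscillation estimates instead.
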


\begin{proof}
Assume first that $f$ satisfies $(A1)$. We apply Lemma \ref{gamma-vec} to $\bar\alpha=\gamma_*$, $\gamma_i=\gamma_*$ and $j=1$ to obtain that there exists $\alpha_k>0$ such that $[\alpha_k,\gamma_*)\subset\mathcal N_k$.

 \noindent Let $f$ satisfy $(A2)$.
We will use here a useful and well known Pohozaev type identity
which plays a key role in this proof.
For a solution $u(\cdot,\alpha)$ of \eqref{ivp}, set
$$E(r,\alpha):=2r^NI(r,\alpha)+(N-2)r^{N-1}u'(r,\alpha)u(r,\alpha).$$
Then
\beq\label{energy}E'(r,\alpha)=r^{N-1}Q(u(r,\alpha)).
\eeq
 Let $k\in\mathbb N$, let $\bar\beta$ be as defined in Definition \ref{constants}(iii). By Lemma \ref{gamma-vec}(ii), if for $\alpha>2\bar\beta$ it holds that $\bar r:=\bar r(\alpha)\ge C_k$, then $\alpha\in\mathcal N_k$.

 Assume that $\alpha\ge2\bar\beta$ and  $\bar r(\alpha)< C_k$.
 Let $\theta\in(0,1)$ be as in assumption $(f_5)$ and let $\alpha$ be large enough to have $\theta\alpha>2\bar\beta$. By setting  $r_\theta >0$  the first point where $u(r_\theta,\alpha)=\theta \alpha$, integration of \eqref{energy} over $[0,\bar r]$ yields
\ben
E(\bar r,\alpha)&\ge& (\int_{0}^{r_\theta}+\int_{r_\theta}^{\bar r})t^{N-1}Q(u(t,\alpha))dt\nonumber\\
 &\ge& \int_{0}^{r_\theta}t^{N-1}Q(u(t,\alpha))dt\quad\mbox{ (as $Q(u(t,\alpha))\ge 0 $ in $[r_\theta,\bar r]$) }\\
&\ge& Q(s_2)\frac{r_\theta^N}{N}\quad\mbox{ where we have set $Q(s_2)=\min_{s\in[\theta\alpha,\alpha]}Q(s)$.}\nonumber
\een
Now we estimate $r_\theta$:
Set $f(s_1)=\max_{s\in[\theta \alpha,\alpha]}f(s)$ ($s_1\in [\theta \alpha,\alpha]$). From the equation in \eqref{ivp}, we obtain, as in \cite{cghy4}
$$r_\theta\ge \Bigl(\frac{c\alpha}{f(s_1)}\Bigr)^{1/2},$$
where $c=2N(1-\theta)$. Therefore, by $(f_5)$ we conclude that
\ben
E(\bar r,\alpha)\ge \frac{1}{N}Q(s_2)\Bigl(\frac{c\alpha }{f(s_1)}\Bigr)^{N/2}\to\infty\mbox{ as $\alpha\to\infty$.}
\een
Let us choose $\alpha_k$ such that for $\alpha>\alpha_k$,
$$E(\bar r,\alpha)\ge 2(C_k+1)^NB+(k+1)\bar Q\frac{(C_k+1)^N}{N}$$
where $\bar Q:=-\min_{s\in[s_0,\bar\beta]}Q(s)\ge0$,  let $\bar u$ be the unique point in $(\gamma_*^-,0)$ such that $F(\bar u)=F(2\bar\beta)$ and set
\ben B=\Bigl(4\bar\beta-2\bar u+\frac{(N-2)|\bar u|}{2(C_k+1)}\Bigr)^2+F(2\bar\beta).
\een
 Let now $\alpha\ge\alpha_k$ and let $r_0=r_0(\alpha)$ be the first point after $\bar r(\alpha)$ such that either
$$r_0=C_k+1,\quad\mbox{or}\quad u'(r_0,\alpha)=0,\quad\mbox{or}\quad F(u(r_0,\alpha))=F(2\bar\beta).$$
As $r_0\le C_k+1$, for $r\le r_0$ we have
\ben
E(r,\alpha)&=&E(\bar r,\alpha)+\int_{2\bar\beta}^{r}t^{N-1}Q(u(t,\alpha))dt\\
&\ge& E(\bar r,\alpha)-\bar Q\frac{(C_k+1)^N}{N}.
\een
implying
\beq\label{p3}
E(r,\alpha)\ge 2(C_k+1)^NB+k\bar Q\frac{(C_k+1)^N}{N}
\eeq and thus
\ben
2I(r,\alpha)+\frac{(N-2)|\bar u|}{C_k+1}|u'(r,\alpha)|\ge 2B.
\een
We deduce that
\ben
\Bigl(|u'(r,\alpha)|+\frac{(N-2)|\bar u|}{2(C_k+1)}\Bigr)^2&\ge& |u'(r,\alpha)|^2+\frac{(N-2)|\bar u||u'(r,\alpha)|}{C_k+1}\\
&\ge& 2B-2F(\bar u)
=\Bigl(4\bar\beta-2\bar u+\frac{(N-2)|\bar u|}{2(C_k+1)}\Bigr)^2,
\een
hence
$$|u'(r,\alpha)|\ge 4\bar\beta-2\bar u>0$$
thus $u'(r_0,\alpha)\not=0$.
 Integrating this last inequality over $(\bar r,r_0)$ and using that $u(r_0,\alpha)\ge \bar u$, we deduce
$$r_0\le C_k+\frac{1}{2}.$$
Hence $F(u(r_0,\alpha))=F(2\bar\beta)$, $u(r_0,\alpha)=\bar u$, implying
 $\alpha\in\mathcal N_1$, and by \eqref{p3},
$$E(r_0,\alpha)\ge 2(C_k+1)^NB+k\bar Q\frac{(C_k+1)^N}{N}$$
Therefore  $T_1(\alpha)<\infty$, $u(T_1(\alpha))<\bar u$ and $f(s)<0$ for $u(T_1(\alpha))\le s\le \bar u$, so there exists a first point  $r^+_0$  after $T_1(\alpha)$ at which $u$ takes the value $\bar u$. If this point is greater than   $C_k$, we are done. As $E(r_0^+,\alpha)\ge E(r_0,\alpha)$, we can repeat the above argument as many times as necessary to conclude $\alpha\in\mathcal N_k$.
\end{proof}
\medskip

\begin{proof}[{\bf Proof of Theorem \ref{main0}}]

We first observe that for each $k\in\mathbb N\cup\{0\}$, $\mathcal G_k\cup\mathcal Q_k$ is bounded by $\alpha_{k+1}$ in Lemma \ref{nn2}. We will prove next that there exists $m\in\mathbb N\cup\{0\}$ such that $\mathcal G_{m}\not=\emptyset$. Once we have done this, we shall denote by $m_1$ the first value of $m$ such that $\mathcal G_m\not=\emptyset$ and
set
$$\alpha_{m_1}^{\#}:=\inf(\mathcal G_{m_1}\cup\mathcal Q_{m_1})\quad \mbox{and}\quad\alpha_{m_1}^{*}:=\sup(\mathcal G_{m_1}\cup\mathcal Q_{m_1}).$$
Then by Lemma \ref{front}(ii) and the definition of $m_1$, $\alpha_{m_1}^{\#}, \alpha_{m_1}^{*}\in\mathcal G_{m_1}$. At this point, we cannot guarantee that $\alpha_{m_1}^{\#}< \alpha_{m_1}^{*}$. As by continuous dependence, for $\bar\alpha\in\mathcal G_{m_1}$ there is a neighborhood of $\bar\alpha$ which is contained in $\mathcal G_{m_1}\cup\mathcal Q_{m_1}\cup\mathcal N_{m_1}$. From the definition of $\alpha_{m_1}^{\#}$ and $\alpha_{m_1}^{*}$, there exists $\delta>0$ such that $(\alpha_{m_1}^{\#}-\delta,\alpha_{m_1}^{\#})\subset \mathcal N_{m_1}$ and
$(\alpha_{m_1}^{*},\alpha_{m_1}^{*}+\delta)\subset\mathcal N_{m_1}$. Hence from Lemma \ref{p2m}(i), by taking a smaller $\delta>0$ if necessary, we may assume that $(\alpha_{m_1}^{\#}-\delta,\alpha_{m_1}^{\#})\subset \mathcal Q_{m_1+1}$ and
$(\alpha_{m_1}^{*},\alpha_{m_1}^{*}+\delta)\subset\mathcal Q_{m_1+1}$. Set now
$$\alpha_{m_1+1}^{\#}=\inf(\mathcal G_{m_1+1}\cup Q_{m_1+1})\quad\mbox{and}\quad \alpha_{m_1+1}^{*}=\sup(\mathcal G_{m_1+1}\cup Q_{m_1+1}).$$
From Lemma \ref{front}(i), $\alpha_{m_1+1}^{\#}<\alpha_{m_1+1}^{*}$, and from Lemma \ref{front}(ii), $\alpha_{m_1+1}^{\#}$ and
$ \alpha_{m_1+1}^{*}$ belong to $\mathcal G_{m_1+1}$. We proceed by induction. At each step  $k\ge m_1+1$, by Lemma \ref{p2m}(i) we have that $\mathcal  Q_{k}\not=\emptyset$    so we can define
$$\alpha_{k}^{\#}=\inf(\mathcal G_{k}\cup Q_{k})\quad\mbox{and}\quad \alpha_{k}^{*}=\sup(\mathcal G_{k}\cup Q_{k})
$$
 to obtain the existence of two different elements in $\mathcal G_k$ for every $k\ge m_1+1$.
\medskip

We prove next that there exists $m\in\mathbb N\cup\{0\}$ such that $\mathcal G_{m}\not=\emptyset$.  From Lemma \ref{nn2}, set
\ben
\alpha^1:=\inf\{\alpha\ge\beta_*\ |\ (\alpha,\gamma_*)\subset \mathcal N_1\}.
\een

Then, by Lemma \ref{front}(i), either $\alpha^1\in\mathcal G_1$ or $\alpha^1\in \Upsilon_1$. In our next arguments, and when both cases are possible, we will assume the worse, that is, that the limit points that we obtain are not in $\mathcal G_k$.\\
Hence we  assume that $\alpha^1\in\Upsilon_1$. Then there exists $i\in\{1,\ldots,M\}$ such that $u(Z_1(\alpha^1),\alpha^1)=\gamma_i$. From Lemma \ref{gamma-vec} and the definition of $\alpha^1$, for any $k\in\mathbb N$,
$$\{\alpha\ge\beta_*\ |\ (\alpha^1,\alpha)\subset \mathcal N_k\}\not=\emptyset.$$
Since $\alpha^1<\gamma_*$, we can choose $d>0$ such that $\alpha^1+d<\gamma_*$ and set, for both sets of assumptions,
\ben
\alpha^1_k:=\sup\{\alpha\in(\alpha^1, \alpha^1+d)\ |\ (\alpha^1,\alpha)\subset\mathcal N_k\}.
\een
As $\{\alpha^1_k\}$ is monotone decreasing in $k$, it converges. Since \eqref{ivp} does not have oscillatory solutions, see Proposition \ref{basic}(iv), it follows that it converges to $\alpha^1$. Hence there exists $k_1>0$ such that
$$\alpha_{k_1}^1<\alpha_{k_1-1}^1<\alpha^1+d\quad\mbox{and}\quad u(Z_{k_1},\alpha_{k_1}^1)=\gamma_j,$$
with $F(\gamma_j)<F(\gamma_i)$ by Lemma \ref{p2m}(ii). We observe that by the strict inequality $\alpha_{k_1}^1<\alpha_{k_1-1}^1$, it holds that $\alpha^1_{k_1}\in\mathcal N_{k_1-1}$ and $(\alpha^1,\alpha^1_{k_1})\subset\mathcal N_{k_1}$. Set, for $k\ge k_1$,
\ben
\alpha^2_k:=\inf\{\alpha\in(\alpha^1,\alpha_{k_1}^1) \ |\ (\alpha,\alpha_{k_1}^1)\subset\mathcal N_k\}.
\een
Now the sequence $\{\alpha^2_k\}$ is monotone increasing in $k$ and the same argument yields  $\alpha_k^2\to \alpha^1_{k_1}$ as $k\to\infty$ and  there exists $k_2>k_1$ such that $$\alpha^2_{k_2-1}<\alpha^2_{k_2}$$
so that $\alpha^2_{k_2}\in\mathcal N_{k_2-1}$, $(\alpha_{k_2}^2,\alpha^1_{k_1})\subset\mathcal N_{k_2}$, and $u(Z_{k_2},\alpha_{k_2}^2)=\gamma_\ell$ with $F(\gamma_\ell)<F(\gamma_j)$, again by Lemma \ref{p2m}(ii). We may continue in this way by setting, for $k\ge k_2$,
\ben
\alpha^3_k:=\sup\{\alpha\in(\alpha_{k_2}^2,\alpha_{k_1}^1) \ |\ (\alpha_{k_2}^2,\alpha)\subset\mathcal N_k\}.
\een
After a finite number of steps we will reach $\gamma_0$ obtaining an $\alpha\in\mathcal G_{k_m}$ for some $k_m\in\mathbb N$.
\end{proof}

\begin{proof}[{\bf Proof of Theorem \ref{main1}}]

\noindent We prove it first for the case that $f$ satisfies $(A1)$.  Assume by contradiction that there exists  $\alpha>\beta_*$ in $\mathcal G_j$, that is $u(\cdot,\alpha)$ has $j-1$ sign changes, for some $j=1,\ldots,k+1$. As $u(\cdot,\alpha)$ crosses the value $\gamma_1$ at a first point $r_{\gamma_1}^1$,  from $|u'(r)|\le (2(F(\gamma_*)+\tilde F))^{1/2}$ for all $r\le r_{\gamma_1}^1$, we find that
$$r_{\gamma_1}^1\ge \frac{\beta_*-\gamma_1}{(2(F(\gamma_*)+\tilde F))^{1/2}},$$
where $\tilde F$ is defined in Lemma \ref{gamma-vec}.
Let $r_{\gamma_1}\ge r_{\gamma_1}^1 $ denote the last point at which $F(u(r_{\gamma_1}))=F(\gamma_1)$, and we may assume it happens after $T_i$, for some $0\le i<j$. Using that $I(Z_j)=0$, we find that
\ben
I(r_{\gamma_1})&=&(N-1)\int_{r_{\gamma_1}}^{Z_j}\frac{|u'|^2}{r}dr\\
&\le&\frac{N-1}{r_{\gamma_1}}(2(F(\gamma_*)+\tilde F))^{1/2}\int_{r_{\gamma_1}}^{Z_j}|u'(r)|dr\\
&=&\frac{N-1}{r_{\gamma_1}}(2(F(\gamma_*)+\tilde F))^{1/2}\Bigl(\int_{r_{\gamma_1}}^{T_{i+1}}|u'(r)|dr+\int_{T_{i+1}}^{T_{i+2}}|u'(r)|dr+\cdots\int_{T_{j-1}}^{Z_j}|u'(r)|dr\Bigr)\\
&\le& \frac{(N-1)2(F(\gamma_*)+\tilde F)}{\beta_*-\gamma_1}(j-i)(\gamma_1-\bar u),
\een
we find that
$$F(\gamma_1)\le(N-1)2(F(\gamma_*)+\tilde F)\frac{j(\gamma_1-\bar u)}{\beta_*-\gamma_1},$$
a contradiction to \eqref{t13}.

\noindent If $f$ satisfies $(A2)$, we let $\alpha_k$ be as defined in Lemma \ref{nn2}. Then we only have to prove that there cannot exist solutions to \eqref{eq2} with initial value $\alpha<\alpha_k$. But then, as
 $|u'(r)|\le (2(\sup_{s\in[0,\alpha_k]}F(s)+\tilde F))^{1/2}$ for all $r>0$, we can argue as  above to obtain  contradiction to \eqref{t13b}.
\end{proof}

\medskip
In order to prove our last result, we need the following lemma, which is another generalization of
 \cite[Lemma 3.1]{gst}.
\begin{lemma}\label{gstlema}

Let $f$ satisfy either $(A1)$or $(A2)$,   $\bar\beta$ be as in Definition \ref{constants}(iii), and  $\alpha>\bar\beta$.
 Let $ r_{\bar\beta}$ be the first point  at which $u( \rbarbetad,\alpha)=\bar\beta$. If
\ben r_{\bar\beta}\ge \bar C:=2(N-1)\frac{\bar\beta-\beta_1}{F(\bar\beta)-F(\gamma_M)}(2(F(\bar\beta)+\hat F))^{1/2},
\een
where $\hat F:= -\min_{s\in[\beta_1,\beta_{*}]}F(s)$, then there exists a first point $r_{\beta_1}>r_{\bar\beta}$ such that $u(r_{\beta_1},\alpha)=\beta_1$, $u'(r_{\beta_1},\alpha)<0$, and
$$r_{\beta_1}-r_{\bar\beta}\le \frac{\beta_*-\beta_1}{(F(\bar\beta)-F(\gamma_M))^{1/2}}+\Bigl(\frac{2 N(\bar\beta-\beta_*)}{\min\limits_{t\in[\beta_*,\bar\beta]}f(t)}\Bigr)^{1/2}.$$

\end{lemma}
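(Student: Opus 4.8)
\textbf{Proof proposal for Lemma \ref{gstlema}.}

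The plan is to follow the energy-estimate strategy of \cite[Lemma 3.1]{gst}, splitting the argument into two stages: first a descent from the level $\bar\beta$ down to the level $\beta_*$, and then a finishing step from $\beta_*$ down to $\beta_1$. Throughout we use that the energy $I(r,\alpha)$ is nonincreasing in $r$ by \eqref{Ider}, that along a monotone decreasing stretch of $u$ one has $|u'(r,\alpha)|\le (2(I(r_{\bar\beta},\alpha)+\hat F))^{1/2}\le (2(F(\bar\beta)+\hat F))^{1/2}$ (here $I(r_{\bar\beta},\alpha)\le F(\bar\beta)$ because the energy has already decreased from its value at $r=0$, and $-\hat F$ is the relevant lower bound of $F$), and the basic facts from Proposition \ref{basic}.

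\emph{Step 1: the solution stays decreasing and reaches $\beta_*$ quickly while $F(u)\ge F(\gamma_M)$.} Starting at $r_{\bar\beta}$ with $u(r_{\bar\beta},\alpha)=\bar\beta>\beta_*>\gamma_M$, I would show $u'<0$ persists as long as $u$ stays above $\gamma_M$: a local minimum can only occur where $f(u)\le 0$, and on $(\gamma_M,\bar\beta)$ we have $F(u)>F(\gamma_M)$ being decreasing past $\beta_*$... more precisely, the hypothesis $r_{\bar\beta}\ge\bar C$ is exactly what forces the energy to be too large, near the level $F(\bar\beta)$, to permit $u$ to turn around before crossing $\beta_*$. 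Quantitatively, integrating \eqref{Ider} over a putative interval where $u$ decreases from $\bar\beta$ to a critical point $r_0$ with $u(r_0,\alpha)\ge\gamma_M$ gives
\[
I(r_{\bar\beta},\alpha)-F(u(r_0,\alpha))=(N-1)\int_{r_{\bar\beta}}^{r_0}\frac{|u'|^2}{r}\,dr
\le \frac{(N-1)}{r_{\bar\beta}}\,(2(F(\bar\beta)+\hat F))^{1/2}\,(\bar\beta-\beta_1),
\]
and the condition $r_{\bar\beta}\ge\bar C$ makes the right-hand side smaller than $F(\bar\beta)-F(\gamma_M)$, contradicting $I(r_{\bar\beta},\alpha)\ge F(\bar\beta)$ unless $u$ has already dropped below $\gamma_M$; in particular it has passed through $\beta_*$ while still strictly decreasing, at a point I call $r_{\beta_*}$. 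On the stretch $[r_{\bar\beta},r_{\beta_*}]$ the energy bound $I\ge F(\bar\beta)$ together with $F(u)\le F(\gamma_M)$ near the bottom... actually the cleaner bound is $|u'|\ge (2(I-F(u)))^{1/2}\ge (2(F(\bar\beta)-F(\gamma_M)))^{1/2}$ wherever $F(u)\le F(\gamma_M)$, which is false on $(\gamma_M,\bar\beta)$; so instead I use $F(u)\le F(\beta_*)=F(\gamma_M)$ only once $u\le\beta_*$ and handle $[r_{\bar\beta},r_{\beta_*}]$ by the mean value theorem using the lower bound on $|u'|$ valid there, giving $r_{\beta_*}-r_{\bar\beta}\le (\bar\beta-\beta_*)/(\text{lower bound on }|u'|)$. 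This is where the term $(2N(\bar\beta-\beta_*)/\min_{t\in[\beta_*,\bar\beta]}f(t))^{1/2}$ appears: one extracts the lower bound on $|u'|$ not from the energy but directly from the equation, as in the $r_\theta$ estimate in the proof of Lemma \ref{nn2}, namely $\frac{d}{dr}(r^{N-1}u')=-r^{N-1}f(u)$ integrated over $[r_{\bar\beta},r]$ gives $|u'(r,\alpha)|\gtrsim \min_{[\beta_*,\bar\beta]}f\cdot(r-r_{\bar\beta})$, and comparing with the displacement $\bar\beta-\beta_*$ yields the stated square-root bound on $r_{\beta_*}-r_{\bar\beta}$.

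\emph{Step 2: from $\beta_*$ to $\beta_1$.} Once $u(r_{\beta_*},\alpha)=\beta_*$ with $u'(r_{\beta_*},\alpha)<0$, on the interval $(\beta_1,\beta_*)$ we have $F(u)<F(\gamma_M)=F(\beta_*)\le I(r_{\beta_*},\alpha)$ — here I use the definition of $\beta_*$ as the largest point in $(\gamma_M,\gamma_*)$ with $F(\beta_*)=F(\gamma_M)$, and $F(\gamma_M)<F(s)$ fails... rather $F(s)<F(\gamma_M)$ for $s\in(\beta_1,\beta_*)$ provided $F(\gamma_M)\ge F(\beta_1)=F(\gamma_0)$-type relations hold, which follows from Definition \ref{constants}. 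Hence $|u'|=(2(I-F(u)))^{1/2}\ge (2(I(r_{\beta_*},\alpha)-F(\gamma_M)))^{1/2}$, but that lower bound could be zero; the nontrivial point is that the energy has strictly decreased below $F(\bar\beta)$ only by a controlled amount across Step 1, so in fact $I(r_{\beta_*},\alpha)$ is still close to $F(\bar\beta)$ and $I(r_{\beta_*},\alpha)-F(\gamma_M)\ge$ something like $F(\bar\beta)-F(\gamma_M)$ minus the small loss — I would track the energy loss explicitly through Step 1 to pin this down, concluding $|u'|\ge (F(\bar\beta)-F(\gamma_M))^{1/2}$ on $(\beta_1,\beta_*)$. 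Then $u$ cannot turn around before hitting $\beta_1$ (no zero of $f$ strong enough), so $r_{\beta_1}$ exists with $u'(r_{\beta_1},\alpha)<0$, and by the mean value theorem $r_{\beta_1}-r_{\beta_*}\le (\beta_*-\beta_1)/(F(\bar\beta)-F(\gamma_M))^{1/2}$. Adding the two displacement bounds gives the claimed estimate on $r_{\beta_1}-r_{\bar\beta}$.

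\emph{Main obstacle.} The delicate part is keeping careful track of the energy: the lower bounds on $|u'|$ that drive both mean-value-theorem estimates rely on $I(r,\alpha)$ not having dropped too much, yet $I$ does decrease along the way, and the amount it decreases is itself controlled only by the very length estimates we are trying to prove. Resolving this requires either a bootstrap/continuity argument (assume $r_{\beta_1}-r_{\bar\beta}$ is within the claimed bound on a maximal subinterval and show strict inequality propagates) or, more cleanly, using the hypothesis $r_{\bar\beta}\ge\bar C$ to get an a priori cap on the total energy loss $\int \frac{N-1}{r}|u'|^2\,dr\le \frac{N-1}{r_{\bar\beta}}(2(F(\bar\beta)+\hat F))^{1/2}\cdot(\text{total variation of }u)$, where the total variation from $\bar\beta$ down to $\beta_1$ is at most $\bar\beta-\beta_1$, so the loss is at most $(F(\bar\beta)-F(\gamma_M))$ by definition of $\bar C$ — this is precisely why $\bar C$ is defined the way it is, and it closes the loop without a bootstrap. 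I also need to double-check the orientation conventions (whether $\alpha\in\mathcal N_{j}$ matters or whether this is purely about the first descent) and that $u'(r_{\beta_1},\alpha)<0$ strictly, which follows since $F(u(r_{\beta_1},\alpha))=F(\beta_1)<I(r_{\beta_1},\alpha)$.
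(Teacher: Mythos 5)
Your proposal is correct and follows essentially the same route as the paper: the hypothesis $r_{\bar\beta}\ge\bar C$, the a priori bound $|u'|\le(2(F(\bar\beta)+\hat F))^{1/2}$, and the total variation bound $\bar\beta-\beta_1$ cap the energy loss at $\tfrac12(F(\bar\beta)-F(\gamma_M))$ (note the factor $\tfrac12$, which the built-in $2$ in $\bar C$ provides and which you need to get $|u'|\ge(F(\bar\beta)-F(\gamma_M))^{1/2}$ on $[\beta_1,\beta_*]$ — your phrase ``at most $F(\bar\beta)-F(\gamma_M)$'' would not suffice), while the stretch from $\bar\beta$ to $\beta_*$ is handled by integrating $(r^{N-1}u')'=-r^{N-1}f(u)$ exactly as in the paper. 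No bootstrap is needed, just as you observe, since the variation bound is independent of the length estimates being proved.
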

\begin{proof}
  As any solution satisfying $\alpha>\beta_*$ must cross $\beta_*$ at a first point that we denote by $r_{\beta_*}$, we integrate \eqref{Ider} over $[\rbarbetad , r]$ with $r>r_{\beta_*}$, and obtain

$$I(r)=I(\rbarbetad)-(N-1)\int_{r_{\bar\beta}}^r\frac{|u'(t)|^2}{t}dt.$$
Since $|u'(r)|\le 2^{1/2}(I(r_{\bar\beta})+\hat F)^{1/2}$ as long as $u(r)\ge\beta_1$, we find that
$$I(r)\ge  I(r_{\bar\beta})\Bigl(1-\frac{\sqrt{2(I(r_{\bar\beta})+\hat F)}}{I(r_{\bar\beta})}\frac{(N-1)(\bar\beta-u(r))}{r_{\bar\beta}}\Bigr).$$
As as
$\sqrt{2(I+\tilde F)}/I$ is decreasing in $I$, $I(r_{\bar\beta})\ge F(\bar\beta)$, and $r_{\bar\beta}\ge \bar C$, we find that
\ben
I(r)&\ge&  I(r_{\bar\beta})\Bigl(1-\frac{F(\bar\beta)-F(\gamma_M)}{2F(\bar\beta)}\Bigr)\ge\frac{F(\bar\beta)+F(\gamma_M)}{2}\\
&\ge & F(u(r))+\frac{F(\bar\beta)-F(\gamma_M)}{2},
\een
implying that as long as $\beta_*\ge u(r)\ge \beta_1$,
$$|u'(r)|\ge \bigl(F(\bar\beta)-F(\gamma_M)\bigr)^{1/2}.$$
Hence $u(Z_{1})<\beta_1$, and
$$\beta_*-\beta_1\ge \bigl(F(\bar\beta)-F(\gamma_M)\bigr)^{1/2}(r_{\beta_1}-r_{\beta_*}).$$
Finally,  by integrating the equation in \eqref{ivp} over $[\rbarbetad,r]$ with $r\le r_{\beta_*}$ we find that
$$r^{N-1}|u'(r)| =\int_{\rbarbetad}^rt^{N-1}f(u(t))dt\ge\min_{s\in[\beta_*,\bar\beta]}f(s)\frac{r^N-\rbarbetad^N}{N},$$
hence
$$|u'(r)|\ge \Bigl(\frac{\min_{s\in[\beta_*,\bar\beta]}f(s)}{N}\Bigr)^{ }(r-\rbarbetad)^{ },$$
implying that
$$2(\bar\beta-\beta_*)\ge \Bigl(\frac{\min_{s\in[\beta_*,\bar\beta]}f(s)}{N}\Bigr)^{ }(r_{\beta_*}-\rbarbetad)^{2},$$
hence the result follows.
\end{proof}

\begin{proof}[{\bf Proof of Theorem \ref{main2}}]
To prove this theorem we only need to prove than under its assumptions, $\mathcal Q_1\not=\emptyset$. Setting
$$\alpha_{1}^{\#}:=\inf(\mathcal G_{1}\cup\mathcal Q_{1})\quad \mbox{and}\quad\alpha_{1}^{*}:=\sup(\mathcal G_{1}\cup\mathcal Q_{1}),$$
and observing that from Lemma \eqref{front}(i) $\alpha_{1}^{\#}<\alpha_{1}^{*}$ we may argue as in the proof of Theorem \ref{main0} to obtain the desired result.\\

Let $\alpha^*>\beta_*$ be such that $u(\cdot,\alpha^*)=u(\cdot)$ crosses the value $\beta_1$. For simplicity of notation we will set $Z_1=Z_1(\alpha^*)$,  $I(Z_1)=I(Z_1,\alpha^*)$ and $I(r_{\beta_1})=I(r_{\beta_1},\alpha^*)$. As $|u'(r)|\le \sqrt{2(I(r_{\beta_1})+\bar F)}$ for $r\in (r_{\beta_1},Z_1)$, by integrating \eqref{Ider} we have
\ben
Z_1^{2(N-1)}I(Z_1)&=&r_{\beta_1}^{2(N-1)}I(r_{\beta_1})-2(N-1)\int_{r_{\beta_1}}^{Z_1}r^{2N-3}|F(u(r))|dr\\
&\le &r_{\beta_1}^{2(N-1)}I(r_{\beta_1})-2(N-1)r_{\beta_1}^{2N-3}\int_{r_{\beta_1}}^{Z_1}|F(u(r))|dr\\
&\le &r_{\beta_1}^{2(N-1)}I(r_{\beta_1})-\frac{2(N-1)r_{\beta_1}^{2N-3}}{(2(I(r_{\beta_1})+\bar F))^{1/2}}\int_{r_{\beta_1}}^{Z_1}|F(u(r))u'(r)|dr\\
&\le&r_{\beta_1}^{2N-3}\Bigl(r_{\beta_1}I(r_{\beta_1})-\frac{2(N-1)}{(2(I(r_{\beta_1})+\bar F))^{1/2}}\int_0^{\beta_1}|F(s)|ds\Bigr).
\een
Hence, if
\beq\label{need}
r_{\beta_1}I(r_{\beta_1})<\frac{2(N-1)}{(2(I(r_{\beta_1})+\bar F))^{1/2}}\int_0^{\beta_1}|F(s)|ds,
\eeq
then $\alpha^*\in\mathcal Q_1$. The proof of this theorem consists in finding  an $\alpha^*$ such that $u(\cdot,\alpha^*)$ crosses $\beta_1$ and  \eqref{need} holds
\medskip

In what follows,  $ \bar C$ is as in Lemma \ref{gstlema}. \\
\medskip

If $\gamma_*<\infty$, and as $f(\gamma_*)=0$, by continuous dependence of the solution of \eqref{ivp} in the initial data, we have that $r_{\bar\beta}(\alpha)\to \infty$ as $\alpha\to\gamma_*$, hence we can choose $\alpha^*>\beta_*$ so that $r_{\bar\beta}(\alpha^*)= \bar C$. Using now that $I(r)\le F(\gamma_*)$, we see that from \eqref{caso1}, $\alpha^*\in\mathcal Q_1$.\\
\medskip

 Let now $\gamma_*=\infty$.
and set
$$M:=( \bar C+1)^N\Bigl(2F(\bar\beta)+(\bar\beta-\beta_1)^2+\frac{\bar Q}{N}\Bigr),$$
where $\bar Q:=-\min_{s\in[s_0,\bar\beta]}Q(s)\ge0$. Using the same argument used in the proof of Lemma \ref{nn2}, we have that
$$\lim_{\alpha\to\infty}E(r_{\bar\beta}(\alpha),\alpha)=\infty.$$
Since $E(r_{\bar\beta}(\bar\beta),\bar\beta)=0$, by continuity there exists a smallest $\bar\alpha>\bar\beta$ such that $E(r_{\bar\beta}(\bar\alpha),\bar\alpha)=M$.

If $\rbarbetad(\bar\alpha)\ge  \bar C$, then again by continuity we can choose $\alpha^*\le\bar\alpha$ such that $\rbarbetad(\alpha^*)=  \bar C$. Moreover, since $E(\rbarbetad(\alpha^*),\alpha^*)\le M$, we find that
$$I(\rbarbetad(\alpha^*))-\frac{(N-2)\bar\beta }{2^{1/2} \bar C}(I(\rbarbetad(\alpha^*)))^{1/2}\le \frac{M}{2 \bar C^N}$$
and hence
\ben
I(\rbarbetad(\alpha^*))\le \frac{(N-2)^2\bar\beta^2}{2 \bar C^2}+\frac{M}{ \bar C^N}.
\een
hence, using $I(r_{\beta_1})\le I(r_{\bar\beta})$ and assumption \eqref{caso1} we obtain that \eqref{need} holds
and thus $\alpha^*\in\mathcal Q_1$.

\medskip

Let now $r_{\bar\beta}(\bar\alpha)<  \bar C$. We will first prove that in this case $u=u(\cdot,\bar\alpha)$ crosses the value $\beta_1$ and $r_{\beta_1}(\bar\alpha)<  \bar C+1$.

Let $r_0=r_0(\bar\alpha)$ be the first point after $\bar r(\bar\alpha)$ such that either
$$r_0=\bar C+1,\quad\mbox{or}\quad u'(r_0,\bar\alpha)=0,\quad\mbox{or}\quad u(r_0,\bar\alpha))=\beta_1.$$
Integrating \eqref{energy} over $[0,r]$ with $r\le r_0$ we get
\ben
2r^NI(r)&\ge&\int_0^{r}t^{N-1}Q(u(t))dt\\
&\ge& M-\frac{\bar Q}{N}r^N\\
&=& ( \bar C+1)^N\Bigl(2F(\bar\beta)+(\bar\beta-\beta_1)^2+\frac{\bar Q}{N}\Bigr)-\frac{\bar Q}{N}r^N\\
&\ge&( \bar C+1)^N\Bigl(2F(\bar\beta)+(\bar\beta-\beta_1)^2\Bigr)
\een
and therefore
$$F(\bar\beta)+\frac{|u'(r)|^2}{2}\ge I(r)\ge F(\bar\beta)+\frac{1}{2}(\bar\beta-\beta_1)^2.$$
We conclude then that $|u'(r)|\ge \bar\beta-\beta_1$ and thus $u'(r_0)\not=0$.
Integrating this last inequality over $[ r_{\bar\beta},r_0]$ we deduce that $u(r_0)<\bar C+1$. Hence, $u(r_0)=\beta_1$.

\medskip

We conclude that
$$\alpha^*:=\inf\{\alpha>\bar\beta\ |\ r_{\beta_1}(s)< \bar C+1\quad\mbox{for all $s\in(\alpha,\bar\alpha)$}\}$$
is well defined. We will show that $u(\cdot,\alpha^*)$ crosses the value $\beta_1$.
If not, then $\alpha^*\in\mathcal S_1\cup \Upsilon_1$, and as $\mathcal S_1$ is open, it must be that $\alpha^*\in\Upsilon_1$. But then $Z_1(\alpha^*)=\infty$, and $u( \bar C+1,\alpha^*)>\gamma_1$, hence by continuity we obtain a contradiction.

 If $\alpha^*=\bar\beta$, then by using that $I(r,\alpha)\le F(\alpha)$ for all $\alpha$, we find that
$$r_{\beta_1}(\bar\beta)I(r_{\beta_1},\bar\beta)\le ( \bar C+1)F(\bar\beta)$$
and hence by assumption \eqref{caso1} again \eqref{need} holds implying $\bar\beta\in\mathcal Q_1$.

If $\alpha^*>\bar\beta$, then it must be that $r_{\beta_1}(\alpha^*)= \bar C+1$.
Hence, as
 $$E(r_{\beta_1})=E(\rbarbetad)+\int_{\rbarbetad}^{r_{\beta_1}}t^{N-1}Q(u(t))dt<M+\sup_{s\in[\beta_1,\bar\beta]}Q(s)\frac{r_{\beta_1}^N}{N},$$
we find that
$$I( \bar C+1)-\frac{(N-2)\beta_1}{( \bar C+1)}(I( \bar C+1))^{1/2}<F(\bar\beta)+\frac{1}{2}(\bar\beta-\beta_1)^2+\frac{\bar Q}{2N}+\frac{1}{2N}\sup_{s\in[\beta_1,\bar\beta]}Q(s)$$
and thus
$$I( \bar C+1)\le 2F(\bar\beta)+(\bar\beta-\beta_1)^2+\frac{\bar Q}{N}+\frac{1}{N}\sup_{s\in[\beta_1,\bar\beta]}Q(s)+\Bigl(\frac{(N-2)\beta_1}{ \bar C+1}\Bigr)^2.$$

Hence, by assumption \eqref{caso1} we have that \eqref{need} holds and thus $\alpha^*\in\mathcal Q_1$.

\end{proof}

\section{Appendix}
In this section we prove that solutions  to \eqref{ivp} cannot be oscillatory. This was done in \cite{cdghm} under different assumptions on $f$ but its proof can be adapted to the present case without any difficulty. We include it here for the sake of completeness.

\begin{proof}[Proof of Lemma \ref{basic}(iv)]

We argue by contradiction and suppose that there is an infinite sequence $\{z_n\}$ (tending to infinity) of simple zeros of $u$.
We denote by $\{z_n^+\}$ the zeros for which $u'(z_n^+)>0$ and by $\{z_n^-\}$ the zeros for which $u'(z_n^-)<0$. We have
\[
0<z_1^-<z_1^+<z_2^-<\cdots<z_n^+<z_{n+1}^-<z_{n+1}^+<\cdots
\]
Between $z_n^-$ and $z_n^+$ there is a minimum $r_n^m$ where $u(r_n^m)<0$ and between $z_n^+$ and $z_{n+1}^-$ there is a maximum $r_n^M$ where $u(r_n^M)>0$.
By Proposition \ref{basic}(ii), $F(u(r_n^M)),\ F(u(r_n^m))\to F(\ell)$ where $\ell$ is a zero of $f$ and $F(\ell)\ge 0$. Let $\mu^-,\ \mu^+$ be the unique points $\mu^-<0<\mu^+$ such that $f(\mu^-)\not=0$, $f(\mu^+)\not=0$, $F(\mu^-)=F(\ell)=F(\mu^+)$ and $F(s)\le F(\ell)$ for all $s\in(\mu^-,\mu^+)$. Let $\{u(r_{k_n}^M)\}$  be any convergent subsequence of $\{u(r_n^M)\}$ and let $\bar u$ be its limit. Then $F(\bar u)=F(\ell)$. As $u$ is oscillatory, we must have that for each $n$, $F(s)\le F(u(r_{k_n}^M))$ for all $u(r_{k_n+1}^m)\le s\le u(r_{k_n}^M)$. In particular, $\bar u$ cannot be a local minimum of $F$. By Lemma \ref{p2m}(ii), we have that $\bar u$ cannot be a local maximum of $F$ either. As $F(s)\le F(\bar u)$ for all $0\le s\le \bar u$, $\bar u=\mu^+$. Using the same argument, any other convergent subsequence of $\{u(r_{n}^M)\}$ has to converge to $\mu^+$. Similarly, $u(r_n^m)$ converges to $\mu^-$.

As both $I(r_n^M)$ and $I(r_n^m)$ are greater than or equal to $F(\ell)$, it must be that $u(r_n^m)<\mu^-$ and $u(r_n^M)>\mu^+$.

As $f(\mu^+)>0$, there exists $\nu>0$ such that $f(s)>0$ for all $s\in[\mu^+-\nu,\mu^++\nu]$ and $f(s)<0$ in $[\mu^--\nu,\mu^-+\nu]$
and we set
$$\bar f:=\min_{s\in [\mu^+-\nu,\mu^++\nu]}f(s)\quad\mbox{and}\quad \bar{\bar f}:=\max_{s\in [\mu^--\nu,\mu^-+\nu]}f(s).$$
We define next the unique points
$$r_{1,n}\in(z_n^+,r_n^M),\quad r_{2,n}\in(r_n^M,z_{n+1}^-),\quad s_{1,n},\ \bar s_{1,n}\in(r_{2,n}, z_{n+1}^-),\quad t_{1,n}\in(z_{n+1}^-, r_{n+1}^m),$$
so that
\[
u(r_{1,n})=\mu^+-\nu=u(r_{2,n})\,,\quad u(s_{1,n})=\delta/2\,,\quad u(\bar s_{1,n})=\delta/4,\quad u(t_{1,n})=\mu^-+\nu\,.
\]
where $\delta$ is defined in $(f_2)$. We have
\[
z_n^+<r_{1,n}<r_n^M<r_{2,n}<s_{1,n}<\bar s_{1,n} <z_{n+1}^-<t_{1,n}<r_{n+1}^m\,.
\]
For $r\in(r_{2,n},t_{1,n})$, $\mu^-<u(r)<\mu^+$, hence $F(u(r))\le F(\ell)$. Also, for $r\in(s_{1,n},\bar s_{1,n})$, $|F(u(r))-F(\ell)|\ge k_0$ for some positive constant $k_0$ independent of $n$. Moreover, by applying the mean value theorem, and Proposition \ref{basic}, we get that there exists a constant $k_1$, which is independent of $n$, such that
\[
0<k_1\le \bar s_{1,n}-s_{1,n}.
\]
From~\eqref{ivp} we have that
\[
|u''(r)|\;=\;\Bigm|\frac{N-1}r\,u'(r)+f(u(r))\Bigm|\;\ge\;\bar f-\frac{N-1}r\,C(\alpha)
\]
for any $r\in[r_{1,n},r_{2,n}]$. If additionally $r\ge\bar  r:=2\,(N-1)\,C(\alpha)/\,\bar f$, then the r.h.s.~in the above inequality is bounded from below by $\bar f/2$. Hence, choosing $n_0$ such that $z_n^+\ge\bar  r$ for all $n\ge n_0$, we have that
\[
|u''(r)|\ge \frac12\,\bar  f\quad\mbox{for all }r\in[r_{1,n},r_{2,n}]
\]
and therefore, again from the mean value theorem and Proposition \ref{basic}, we get that
\[
2\,C(\alpha)\ge |u'(r_{2,n})-u'(r_{1,n})|=|u''(\xi)|\,(r_{2,n}-r_{1,n})\ge\frac12\,\bar  f\,(r_{2,n}-r_{1,n})
\]
implying that
\[\label{nos1}
r_{2,n}-r_{1,n}\le\frac{2\,C(\alpha)}{\bar  f}\;.
\]
Let $\tilde H$ be as in \eqref{tildeH} with $\gamma$ replaced by $\ell$, that is,
$$\tilde H(r,\alpha)=r^{2(N-1)}(I(r,\alpha)-F(\ell)),$$
so that
$$\tilde H'(r,\alpha)=2(N-1)r^{2N-3}(F(u(r,\alpha))-F(\ell)).$$
We have
\begin{eqnarray}
\frac{\tilde H(t_{1,n})-\tilde H(r_{1,n})}{2(N-1)}&=&\int_{r_{1,n}}^{r_{2,n}}r^{2N-3}\,(F(u)-F(\ell))\;dr+
\int_{r_{2,n}}^{t_{1,n}}r^{2N-3}\,(F(u)-F(\ell))\;dr\nonumber\\
&=&\int_{r_{1,n}}^{r_{2,n}}r^{2N-3}\,(F(u)-F(\ell))\;dr-
\int_{r_{2,n}}^{t_{1,n}}r^{2N-3}\,|F(u)-F(\ell)|\;dr\nonumber\\
&\le& \int_{r_{1,n}}^{r_{2,n}}r^{2N-3}\,(F(u)-F(\ell))\;dr-
\int_{s_{1,n}}^{\bar s_{1,n}}r^{2N-3}\,|F(u)-F(\ell)|\;dr\nonumber\\
&\le& (F(u(r_n^M))-F(\ell))r_{2,n}^{2N-3}\frac{2\,C(\alpha)}{\bar  f}-\,k_0\,k_1\,r_{2,n}^{2N-3}\,.\nonumber
\end{eqnarray}
Since $\lim_{n\to+\infty}F(u(r_n^M))-F(\ell)=0$, we can choose $n_0$ large enough so that
\[
\frac{2\,C(\alpha)}{\bar  f}\,(F(u(r_n^M))-F(\ell))-\,k_0\,k_1<-\frac{1}{2}\,k_0\,k_1
\]
for all $n\ge n_0$, and hence
\[
\tilde H(t_{1,n})-\tilde H(r_{1,n})\le-(N-1)\,k_0\,k_1\,r_{2,n}^{2N-3}\,.
\]
Clearly, we can repeat the above argument in the interval $(t_{1,n},r_{1,n+1})$, thus proving that
\[
\tilde H(r_{1,n_0+j})-\tilde H(r_{1,n_0})\le-(N-1)\,k_0\,k_1\sum_{i=0}^{j-1}\Bigl(r_{2,n_0+i}^{2N-3}+t_{2,n_0+i}^{2N-3}\Bigr)
\]
where $t_{2,n}\in(r_{n+1}^m,z_{n+1}^+)$ is uniquely defined by the condition $u(t_{2,n})=\mu^-+\nu$. Hence
\[
\lim_{j\to+\infty}\tilde H(r_{1,n_0+j})=-\infty\,,
\]
implying the contradiction that $I(r_{1,n_0+j})<F(\ell)$ for some $j$ large enough.
\end{proof}

\end{document}